\journal{arXiv}
\newtheorem{theorem}{Theorem}
\theoremstyle{definition}
\newtheorem{remark}{Remark}
\newcommand{\bs}[1]{\boldsymbol{#1}}
\DeclareMathOperator*{\argmax}{argmax}
\begin{document}

\begin{frontmatter}

\title{An Efficient Second-Order Adaptive Procedure for Inserting
CAD Geometries into Hexahedral Meshes using Volume Fractions}
\author[sandia]{Brian N. Granzow\corref{correspondence}}
\author[sandia]{Stephen D. Bond}
\author[sandia]{Michael J. Powell}
\author[sandia]{Daniel A. Ibanez}
\address[sandia]{Sandia National Laboratories \\
P.O. Box 5800 \\
Albuquerque, NM 87185-1321}
\cortext[correspondence]{Corresponding author, bngranz@sandia.gov}
\fntext[label1]{
Sandia National Laboratories is a multimission laboratory managed and operated
by National Technology and Engineering Solutions of Sandia LLC, a wholly owned
subsidiary of Honeywell International Inc. for the U.S. Department of Energy's
National Nuclear Security Administration under contract DE-NA0003525. This
article describes objective technical results and analysis. Any subjective
views or opinions that might be expressed in the article do not necessarily
represent the views of the U.S. Department of Energy or the United States
Government.
}

\begin{keyword}
CAD geometry \sep%
adaptive \sep%
volume fractions \sep%
hexahedron \sep%
unstructured mesh
\end{keyword}

\begin{abstract}
This paper is concerned with inserting three-dimensional computer-aided design
(CAD) geometries into meshes composed of hexahedral elements using a volume
fraction representation. An adaptive procedure for doing so is presented.
The procedure consists of two steps. The first step performs spatial
acceleration using a $k$-d tree. The second step involves subdividing
individual hexahedra in an adaptive mesh refinement (AMR)-like fashion and
approximating the CAD geometry linearly (as a plane) at the finest subdivision.
The procedure requires only two geometric queries from a CAD kernel:
determining whether or not a queried spatial coordinate is inside or
outside the CAD geometry and determining the closest point on the CAD
geometry's surface from a given spatial coordinate. We prove that the
procedure is second-order accurate for sufficiently smooth geometries and
sufficiently refined background meshes. We demonstrate the expected order
of accuracy is achieved with several verification tests and illustrate the
procedure's effectiveness for several exemplar CAD geometries.
\end{abstract}

\end{frontmatter}

\section{Introduction}
\label{sec:introduction}

Computational meshes are an inherent component of many numerical methods.
Inserting geometric objects into such meshes is often a requisite procedure
during simulation initialization, particularly when considering interactions
between multiple materials
\cite{hirt1981volume, niederhaus2023alegra, weiss2016spatially, rider1998reconstructing,
aftosmis1998robust}.
In this context, the location, orientation, and shape of material interfaces is often encoded
in the form of per-material \emph{volume fractions}. A volume fraction
denotes the ratio of a material's volume within a given mesh element to the
volume of the element itself. This paper is concerned with inserting
three-dimensional geometries represented by computer-aided design (CAD)
software into meshes composed of \emph{hexahedral} elements via volume
fractions.

A common workflow for inserting a CAD geometry into a background
mesh with a volume fraction representation (see e.g. \cite{weiss2016spatially})
involves triangulating the CAD geometry surface, iterating over multiple
sample points within each element in the mesh, and at each sample point,
performing an in/out query with respect to the triangulated surface.
We propose a novel volume fraction insertion
procedure that operates directly on the CAD geometry.
Given a point, we are able to directly query the CAD model for whether
that point is inside or outside the geometry as well as which point on the
CAD surface is closest. This precludes the need to triangulate complex surfaces.
By using the concept of a closest point and a bounding sphere, we are able
to apply a spatial acceleration tree structure to the mesh elements,
producing a method whose runtime scales sublinearly with the number of mesh
elements. 

A key realization is the following: if, given a set of many mesh elements, one
can compute a bounding sphere around them and show that the entire bounding sphere
is inside or outside the geometry, then all of those mesh elements may be marked
as entirely inside or outside the geometry.
We then apply this concept hierarchically following the structure of a $k$-d tree
built from the centroids of the mesh elements. This spatial acceleration
tree structure allows us to avoid querying most of the mesh elements during
the volume fraction computation. Within a single element, we adaptively subdivide
the element in octree fashion and apply the same bounding sphere concept to
guide the subdivision. Finally, for each of the finest subhexahedra, we
approximate the CAD geometry's surface as a plane and compute a volume fraction
contribution using the intersection of this plane and the subhex.

The remainder of this paper is structured as follows. First, we provide
a detailed description of the proposed volume fraction insertion procedure,
as described in three parts: the construction of a $k$-d tree based on a
given set of hexahedral mesh elements, the use of the $k$-d tree to spatially
accelerate volume fraction assignment, and the use of an AMR-like procedure
to obtain accurate volume fraction representations for individual hexahedra
that intersect the CAD geometry's surface. Next, we prove that this procedure
attains second-order accuracy in volume under specific conditions.
Lastly, we present results that demonstrate the expected order of accuracy
and highlight the effectiveness and efficiency of the procedure.

\section{Volume Fraction Insertion Procedure}
\label{sec:procedure}

\subsection{$k$-d Tree Construction}
\label{ssec:kdtree}

Let $\{K^e\}_{e=1}^{n_{\text{el}}}$ denote a set of $n_{\textup{el}}$ hexahedra with
mean coordinate centroids $\{ \bs{c}^e \}_{e=1}^{n_{\text{el}}}$. The volume
fraction insertion procedure begins by constructing a $k$-d tree that spatially
partitions the coordinates $\{\bs{c}^e\}_{e=1}^{n_{\text{el}}}$ using recursive
coordinate bisection \cite{bentley1975multidimensional, cormen2022introduction}.
Briefly, the $k$-d tree we use is a binary tree that separates
coordinates in $d$-dimensional ($d=3$ presently) space with axis-aligned
hyperplanes. For a given non-leaf node in the tree, recursive coordinate
bisection chooses the axis normal to this separating plane as
\begin{equation}
l = \argmax_{k} \text{diam} ( \{ \bs{e}_k \cdot \bs{c}^n \}_{n=1}^{n_{\text{node}}} ),
\end{equation}
where $\bs{e}_k$ denotes the $k^{\text{th}}$ basis vector and
$n_{\text{node}}$ denotes the number of centroids associated with the
current node. The node's two children contain 
$\lfloor \frac12 n_{\text{node}} \rfloor$ or
$\lfloor \frac12 n_{\text{node}} \rfloor +1$
centroids with $l$-coordinate greater than or less than the
separating plane's $l$-coordinate, given as:
\begin{equation}
x_{l} = \frac{1}{n_{\text{node}}} \sum_{n=1}^{n_{\text{node}}} c^n_l.
\end{equation}
A bounding sphere that fully covers the set of hexahedra associated with
a given node is computed and stored for each node in the tree. We
conceptually illustrate this $k$-d tree in two dimensions
in Figure \ref{fig:tree}. The construction of the $k$-d tree occurs once
before any geometry is inserted into the mesh and can be reused
when inserting multiple geometries.
\begin{figure}[ht!]
\centering
\includegraphics[width=.5\linewidth]{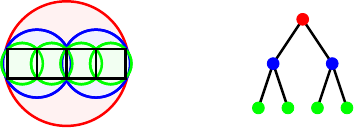}
\caption{Left: A mesh with four quadrilateral elements (black) and
bounding circles associated with nodes in the $k$-d tree (red, blue, green).
Right: The $k$-d tree structure associated with the mesh on the left, where
the root node is associated with all mesh elements and leaf nodes are 
associated with a single mesh element.}
\label{fig:tree}
\end{figure}

\subsection{Bulk Volume Fraction Sampling}
\label{ssec:bulk_sampling}

Let $\Omega \subset \mathbb{R}^3$ denote a bounded domain represented
by a CAD kernel with boundary $\partial \Omega$. Spatial acceleration
for representing the domain $\Omega$ in the mesh elements
$\{ K^e \}_{e=1}^{n_{\text{el}}}$ by volume fractions
is achieved by recursively descending the nodes of
the $k$-d tree. A given node's
bounding sphere is determined to be fully inside, fully outside, or
intersecting the geometry $\Omega$ using two CAD queries, as
discussed in the next
paragraph. If the bounding sphere is found to be either fully inside
or fully outside of $\Omega$, then all of the tree node's
hexahedra are assigned volume fractions of one or zero, respectively,
and recursion is terminated. If the bounding sphere is found to be
intersecting $\Omega$, then recursion continues. If the recursion procedure
reaches a leaf node, then recursion is terminated and an individual
volume fraction is assigned using the AMR-like procedure described in
Section \ref{ssec:element_sampling}.

\begin{figure}[ht!]
\centering
\includegraphics[width=.35\linewidth]{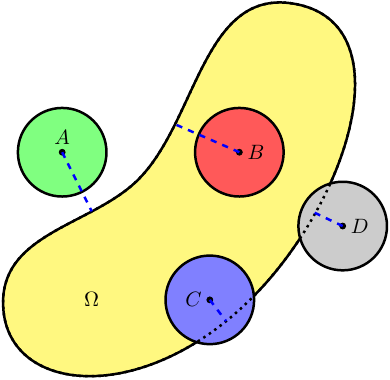}
\caption{Four potential bounding sphere classifications when
comparing the sphere's radius to the closest distance to the geometry's
surface. (A) The bounding sphere is fully outside of $\Omega$. (B)
The bounding sphere is fully inside of $\Omega$. (C) The bounding
sphere intersects $\Omega$ and its center $\bs{x}^C_c \in \Omega$.
(D) The bounding sphere intersects $\Omega$ and its center
$\bs{x}^D_c \notin \Omega$.}
\label{fig:bulk}
\end{figure}

To classify a bounding sphere as fully inside, fully outside, or
intersecting the geometry $\Omega$, the bounding sphere's center
$\bs{x}^{\text{sphere}}_c$ is first determined to be inside or outside of $\Omega$
using a CAD in/out query. Then the point $\bs{x}_p$ on the geometry's
surface that is closest to the bounding sphere's center $\bs{x}^{\text{sphere}}_c$
is determined using a CAD closest point query. The Euclidean
distance $d = \| \bs{x}^{\text{sphere}}_c - \bs{x}_p \|$ from this point to
the sphere's center is computed
and compared to the bounding sphere's radius $r$.
This leads to four potential scenarios as illustrated by Figure
\ref{fig:bulk}. If $d > r$ and $\bs{x}^{\text{sphere}}_c \notin \Omega$, then the bounding
sphere is fully outside $\Omega$ (sphere A), if $d > r$ and
$\bs{x}^{\text{sphere}}_c \in \Omega$, then the bounding sphere is
fully inside $\Omega$ (sphere B), and if $d < r$, then the bounding
sphere intersects $\Omega$ (spheres C and D).

\begin{remark}
Floating point precision and other factors in CAD kernel
implementations can lead to `non-watertight' geometries, for which
in/out CAD queries may not be robust. Strategies exist
\cite{jacobson2013robust, spainhour2024robust} to generalize the
notion of containment within a body that we do not presently consider
but could be pursued in future work.
\end{remark}

\begin{remark}
The depth of the $k$-d tree can increase with increased topological
complexity; for example, high-genus geometries and/or additively
manufactured parts with complex geometric features. This can lead to a
loss of spatial acceleration because the $k$-d tree nodes will not
admit large subsets of elements that are entirely inside or outside
of the CAD geometry. The overall accuracy of the procedure,
will not be affected, only the spatial acceleration performance.
\end{remark}

\subsection{Adaptive Element Volume Fraction Sampling}
\label{ssec:element_sampling}

Once a leaf in the $k$-d tree is reached, the volume of intersection
between the leaf's corresponding mesh hexahedron and the domain $\Omega$
must be determined. This volume is approximated using an octree, where
hexes are recursively subdivided into eight subhexes in an AMR-like fashion.
The left image in Figure \ref{fig:amr} illustrates this subdivision
process in two spatial dimensions.
Presently, the number of AMR subdivisions is a user-specified input,
where a larger number of subdivisions will correspond to a more accurate
volume fraction representation of the domain $\Omega$.
It is, however, difficult for a user to know \emph{a priori} the number
of AMR subdivisions required to achieve a desired tolerance. As
an alternative, one could envision using the asymptotic error bound
(described in Section \ref{sec:error_estimate}) to adaptively determine
a local (per-element) termination criteria based on a user-specified
volume error tolerance.
We leave this as
an avenue for future research. To determine if an individual subhex
is fully inside, fully outside, or intersecting the CAD geometry's
surface, a bounding sphere of the subhex is constructed centered
at the subhex's mean coordinate centroid $\bs{x}^{\text{sphere}}_c$
and the same logic described in the previous section is
applied (where a CAD in/out query and a CAD closest point query
fully characterizes the subhex's in/out/intersecting status). Subhexes that are fully
outside or fully inside of the domain $\Omega$ are given subhex
volume fractions of zero and one, respectively, and recursion
is terminated.

\begin{figure}[ht!]
\centering
\begin{subfigure}{0.2\textwidth}
\centering
\includegraphics[width=.99\linewidth]{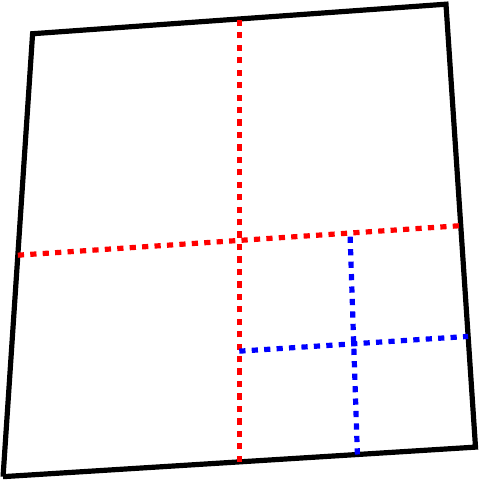}
\end{subfigure}%
\hspace{4em}
\begin{subfigure}{0.2\textwidth}
\centering
\includegraphics[width=.99\linewidth]{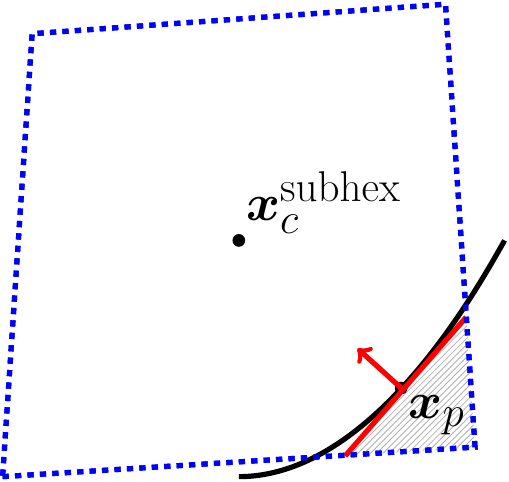}
\end{subfigure}
\caption{Left: A two-dimensional representation of a hexahedron
(solid black) subdivided to two AMR levels. Right: A
two-dimensional representation of the intersection of a subhex
(dashed blue) with a CAD geometry (solid black line), the planar
approximation of the geometry's surface within the subhex
(solid red line), and the plane's unit normal (solid red line
with arrow). The shaded subvolume corresponds to the approximated
CAD geometry within the subhex.}
\label{fig:amr}
\end{figure}

At subhexes that have reached the maximal AMR depth, the
geometry is approximated by a plane, as illustrated by
the right image in Figure \ref{fig:amr}. The plane is defined
by a point $\bs{x}_p$ on the CAD surface and a plane surface
normal $\bs{n}_p$. The point $\bs{x}_p$ is chosen as the the
closest point on the geometry's surface to the the subhex's
mean coordinate centroid $\bs{x}^{\text{subhex}}_c$. The plane
surface normal $\bs{n}_p$ is
chosen to be parallel to the vector
$\bs{x}^{\text{subhex}}_c -\bs{x}_p$, which is also,
by definition,
parallel to the geometry's outward surface normal
(as long as the domain $K^{\text{subhex}} \cap \Omega$ is simply
connected and $\partial \Omega$ is sufficiently smooth).
The normal vector is oriented outward to the surface
$\partial \Omega$ by querying whether the subhex
centroid is inside or outside of the domain $\Omega$.
Using \texttt{r3d} \cite{powell2015exact}, the subhex is
cut by the constructed plane and the volume $V^{\text{poly}}$
of the resultant polyhedron is computed. Lastly, the
volume fraction $V^{\text{poly}} / \text{meas}(K^e)$ is
contributed to the original hex's total volume fraction.
In the degenerate case when the closest point $\bs{x}_p$ is
identical to the subhex centroid $\bs{x}^{\text{subhex}}_c$,
we choose a subhex volume fraction of $\frac12$.

\begin{remark}
If the domain $K^{\text{subhex}} \cap \Omega$ is not simply
connected or contains more than a single surface in the interior
of $K^{\text{subhex}}$ (such as a small filament), then it may
be possible to employ more sophisticated techniques
\cite{dyadechko2008reconstruction, ahn2009adaptive, jemison2015filament}
to capture and represent the geometry within the mesh.
In particular, we propose moment-of-fluid initialization from CAD
geometry as an avenue for future research.
\end{remark}

\section{Error Estimate}
\label{sec:error_estimate}

\begin{theorem}
For a bounded CAD geometry $\Omega \subset \mathbb{R}^d$ with a smoothly
twice-differentiable boundary, $\partial \Omega$, and a sufficiently
refined background mesh, the volume insertion error is bounded
by $|V_{\textup{exact}} - V_h | \leq C h^{2}$, where $V_h$ and
$V_{\textup{exact}}$ are the approximate and exact volumes,
respectively, $C$ is a constant, and $h$ is the background element
size.
\end{theorem}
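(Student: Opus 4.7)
The plan is to decompose the total volume error $|V_{\text{exact}} - V_h|$ into contributions from individual subhexes at the terminal level of the AMR recursion, and to observe that most of these contributions are exactly zero. Any mesh element (or cluster of elements) classified by the bulk procedure of Section \ref{ssec:bulk_sampling} contributes no error, because its bounding sphere lies entirely on one side of $\partial \Omega$ and the assignment of volume fraction zero or one is therefore exact. The same is true of any subhex in Section \ref{ssec:element_sampling} whose bounding sphere is fully inside or fully outside $\Omega$. The only potentially nonzero error contributions come from subhexes at the maximal AMR depth whose bounding sphere intersects $\partial \Omega$.

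For each such subhex, the algorithm replaces $\partial \Omega \cap K^{\text{subhex}}$ by the tangent plane $\Pi$ at the closest point $\bs{x}_p$: the closest-point property guarantees that $\bs{x}^{\text{subhex}}_c - \bs{x}_p$ is perpendicular to $T_{\bs{x}_p} \partial \Omega$, so $\Pi$ is genuinely the tangent plane to $\partial \Omega$ at $\bs{x}_p$. I would bound the per-subhex error by the measure of the symmetric difference $(K^{\text{subhex}} \cap \Omega) \,\triangle\, (K^{\text{subhex}} \cap \Pi^{+})$, which is in turn bounded by the planar area of $\Pi$ inside the subhex's bounding sphere times the maximum normal deviation of $\partial \Omega$ from $\Pi$ over that region. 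A second-order Taylor expansion of $\partial \Omega$ as a graph over $\Pi$ near $\bs{x}_p$ yields deviation at most $\tfrac{1}{2}\kappa_{\max} r^2$, where $r$ is the in-plane distance from $\bs{x}_p$ and $\kappa_{\max}$ bounds the principal curvatures of $\partial \Omega$. Since the AMR depth is a fixed constant, each subhex has diameter $O(h)$, giving a planar area of $O(h^2)$, a maximum deviation of $O(h^2)$, and therefore a per-subhex sliver volume of $O(h^4)$.

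To conclude, I would count the boundary subhexes. Because $\partial \Omega$ is a compact $C^2$ hypersurface of finite $(d-1)$-dimensional measure, a standard covering argument shows that at most $O(h^{-(d-1)}) = O(h^{-2})$ background hexes can have a bounding sphere that intersects $\partial \Omega$, and each such hex produces only a bounded number of boundary subhexes at the maximal AMR depth. Summing the per-subhex sliver bound gives $|V_{\text{exact}} - V_h| \leq O(h^{-2}) \cdot O(h^4) = O(h^2)$, with the constant $C$ expressible in terms of $\kappa_{\max}$ and the surface measure $|\partial \Omega|$.

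The hard part will be making precise what \emph{sufficiently refined} means and using it to justify the graph representation and Taylor estimate uniformly over every boundary subhex. For this I would invoke the reach (or local feature size) of $\partial \Omega$, which is bounded below by a positive constant proportional to $1/\kappa_{\max}$ under the $C^2$ hypothesis. Provided $h$ is a small fraction of the reach, the closest-point projection from any point in a boundary subhex is well-defined and smooth, $\partial \Omega$ is locally a $C^2$ graph over each of its tangent planes in the relevant neighborhood, and the simply-connectedness hypothesis of Section \ref{ssec:element_sampling} holds automatically so that the planar cut correctly partitions $K^{\text{subhex}}$. The degenerate case $\bs{x}_p = \bs{x}^{\text{subhex}}_c$ (fallback to volume fraction $\tfrac{1}{2}$) only arises when the subhex centroid lies essentially on $\partial \Omega$; such subhexes are confined to an $O(h)$ tube around the surface and contribute at most $O(h^3)$ in aggregate, which is absorbed into the final $O(h^2)$ bound.
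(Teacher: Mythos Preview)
Your proposal is correct and shares the paper's overall architecture: only cells (or subhexes) whose bounding sphere meets $\partial\Omega$ contribute error, each such cell contributes $O(h^{d+1})$ (an $O(h^{d-1})$ interfacial area times an $O(h^{2})$ normal deviation coming from a second-order Taylor expansion at the closest point $\bs{x}_p$), and there are $O(h^{-(d-1)})$ of them. The per-cell step is formalized differently, though. The paper routes everything through the signed distance function $f$ and its local linearization $g^e(\bs{x})=(\bs{x}-\bs{x}_p)\cdot\bs{n}_p$, writes the cell error as $\int_{K^e}|\Phi(f)-\Phi(g^e)|\,\text{d}\bs{x}$, and then collapses this to a surface integral by a formal mean-value/Dirac-$\delta$ argument. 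Your direct geometric bound on the symmetric-difference sliver (planar area times maximum normal deviation, via the graph representation of $\partial\Omega$ over its tangent plane) is more elementary and sidesteps that distributional heuristic; your explicit use of the positive reach of $\partial\Omega$ also gives concrete content to the paper's unelaborated ``sufficiently refined'' hypothesis and justifies the simple-connectedness assumption of Section~\ref{ssec:element_sampling}. The signed-distance framing, in turn, has the virtue of making immediately transparent that the algorithm's plane is precisely the first-order Taylor polynomial of $f$ at $\bs{x}_p$, a point you recover indirectly through the closest-point orthogonality condition.
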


\begin{proof}

Let $f(\bs{x}, \Omega)$ denote a signed distance function, defined as
\[
f(x,\Omega) :=
\begin{cases}
\begin{aligned}
-&d(\bs{x}, \partial \Omega) && \text{if} \; \bs{x} \in \Omega, \\
&d(\bs{x}, \partial \Omega) && \text{otherwise},
\end{aligned}
\end{cases}
\]
where $d(\bs{x}, \partial \Omega)$ denotes the minimal distance
from a point $\bs{x}$ to a surface $\partial \Omega$, given as
\[
d(\bs{x}, \partial \Omega) := \inf_{\bs{y} \in \partial \Omega}
\| \bs{x} - \bs{y} \|_2.
\]
Note the signed distance $f$ is positive for points exterior to $\Omega$
and negative for points interior to $\Omega$.

Suppose that the CAD geometry is covered by a mesh $\mathcal{T} := \cup_e \overline{K^e}$,
with elements $\left\{ K^e \right\}_{e=1}^{n_{\text{el}}}$ of size $h$.
Splitting over elements, the exact volume is
\[
V_{\text{exact}} = \sum_{K^e \in \mathcal{T}}
\int_{K^e} \, \Phi(f(\bs{x}, \Omega)) \, \text{d} \bs{x},
\]
where $\Phi$ is the step function
\[
\Phi(x) =
\begin{cases}
\begin{aligned}
&1, && x \leq 0, \\
&0, && \text{otherwise}.
\end{aligned}
\end{cases}
\]
Let $g^e$ be the local linear approximation to $f$ in element $K^e$.
Then the approximate volume represented by the volume fraction insertion
procedure is
\[
V_h = \sum_{K^e \in \mathcal{T}}
\int_{K^e} \, \Phi(g^e(\bs{x})) \, \text{d} \bs{x}.
\]
The total volume insertion error can then be bounded above as
\[
\begin{aligned}
|V_{\text{exact}} - V_h| &=
\left|
\sum_{K^e \in \mathcal{T}}
\left(
\int_{K^e} \Phi(f(\bs{x}, \Omega)) \, \text{d} \bs{x}
- \int_{K^e} \Phi(g^e(\bs{x})) \, \text{d} \bs{x}
\right)
\right| \\
& \leq
\sum_{K^e \in \mathcal{T}} \int_{K^e}
\left|
\Phi(f(\bs{x}, \Omega)) - \Phi(g^e(\bs{x}))
\right| \, \text{d} \bs{x}.
\end{aligned}
\]

For a given $\bs{x}_e \in K^e$, let $\bs{x}_p \in \partial \Omega$
be the closest point on the CAD geometry surface. Assuming the surface
is smoothly differentiable, the first-order constrained optimality
condition requires that the gradient of the signed distance function
is parallel to the surface normal with
\[
\alpha (\bs{x}_p - \bs{x}_e) = \nabla_{\bs{x}} f(\bs{x}_p, \Omega) = \bs{n}_p,
\]
for some scalar $\alpha$, where $\bs{n}_p$ is the outward normal to
$\Omega$ at point $\bs{x}_p$. Using the Taylor series expansion
of $f$ at $\bs{x}_p$, we find
\[
f(\bs{x}, \Omega) =
f(\bs{x}_p, \Omega) +
(\bs{x} - \bs{x}_p) \cdot \nabla_{\bs{x}} f(\bs{x}_p, \Omega) +
\frac12 (\bs{x} - \bs{x}_p)^T H_f(\bs{\xi}, \Omega) (\bs{x} - \bs{x}_p),
\]
where $H_f$ is the Hessian of the signed distance function and $\bs{\xi}$
is some point on the chord connecting $\bs{x}$ and $\bs{x}_p$. Note that
the first two terms of the expansion are exactly the local linear
approximation to $f$ used by the CAD insertion algorithm
\[
g^e(\bs{x}) = f(\bs{x}_p, \Omega) + (\bs{x} - \bs{x}_p) \cdot
\nabla_{\bs{x}} f(\bs{x}_p, \Omega).
\]
In an element of size $h$, one can show that
\begin{equation}
|f(\bs{x}, \Omega) - g^e(\bs{x})| =
\left|
\frac12 (\bs{x} - \bs{x}_p)^T H_f(\bs{\xi}, \Omega)
(\bs{x} - \bs{x}_p)
\right|
\leq \tilde{C} h^2,
\label{eq:proof1}
\end{equation}
where the constant $\tilde{C}$ is proportional to the maximum
spectral radius of $H_f$ over the element.

Introduce the functional
\[
\Psi(s) := \Phi(s f(\bs{x}, \Omega) + (1-s) g^e(\bs{x})),
\]
and note that $\Psi(1) = \Phi(f(\bs{x}, \Omega))$ and
$\Psi(0) = \Phi(g^e(\bs{x}))$. Applying the mean-value theorem
to $\Psi$, we obtain
\[
\int_{K^e} \left| \Phi(f(\bs{x}, \Omega)) - \Phi(g^e(\bs{x})) \right| \, \text{d} \bs{x} =
\int_{K^e} \left| (f(\bs{x}, \Omega) - g^e(\bs{x})) \right|
\delta(\gamma f(\bs{x}, \Omega) + (1-\gamma) g^e(\bs{x})) \, \text{d} \bs{x},
\]
where $\gamma$ is a scalar on the interval in $[0,1]$
and $\delta$ denotes the Dirac delta function.
Using the inequality \eqref{eq:proof1} yields
\[
\int_{K^e}
\left|
\Phi(f(\bs{x}, \Omega)) - \Phi(g^e(\bs{x}))
\right|
\leq
\tilde{C} h^2 \int_{K^e}
\delta(\gamma f(\bs{x}, \Omega) + (1-\gamma) g^e(\bs{x})) \, \text{d} \bs{x}
\leq
\tilde{C} h^2 \hat{C} h^{d-1},
\]
where $\hat{C} h^{d-1}$ is the area of the surface
$\left\{ \bs{x} \in K^e \subset \mathbb{R}^d | \,
(1-\gamma)f(\bs{x}, \Omega) + \gamma g^e(\bs{x}) = 0 \right\}$.
Finally, for a regular CAD surface,
the number of elements intersecting $\partial \Omega$ is
proportional to $1/h^{d-1}$, so that the total volume insertion
error is bounded by
\[
|V_{\text{exact}} - V_h| \leq
\sum_{K^e \in \mathcal{T}} \tilde{C} h^2 \hat{C} h^{d-1}
= Ch^2,
\]
for some constant $C$.
\end{proof}

\section{Results}
\label{sec:results}

In this section, we investigate the efficacy of the proposed volume
fraction insertion procedure for a variety of numerical simulations.
Throughout, we make reference and comparison to a uniform sampling
procedure, which we describe as follows. We consider a reference hexahedron
$K^R := \{ \bs{\xi} : \bs{\xi} \in [-1,1]^3 \}$ and take the coordinates
$\{ \bs{\xi} \}_{i=1}^{n_s}$ defined as the centroids
of the subhexes found by uniformly refining the reference hex
$n_{\text{sub}}$ times, so that the total number of sample points
is given as $n_s = (2^{n_{\text{sub}}})^3$.
Each reference coordinate $\bs{\xi}_i$ is
mapped to a physical coordinate $\bs{x}_i$ using trilinear Lagrange shape
functions and a CAD in/out query is performed at this physical coordinate.
An element volume fraction is then computed as a Jacobian determinant weighted ratio of
the number of sample points found inside the domain $\Omega$ to the total number of
sample points $n_s$.

\subsection{Verification}
\label{ssec:results_verification}

In this section, we demonstrate the proposed volume fraction
procedure achieves the theoretical rate of convergence for two
simple geometries, a unit sphere and a hemispherically capped oblique
cylinder, inserted into two example
background meshes, an axis-aligned hexahedral mesh and the same
mesh with a sinusoidal perturbation applied to its coordinates.
The axis-aligned mesh is defined by the domain
$\{ \bs{x} : \bs{x} \in [-2,2]^3 \}$ with $32^3$ cubical
elements. The sinusoidally perturbed mesh applies the
coordinate transformations
\begin{equation}
\begin{aligned}
x &= x + 0.1\sin(\nicefrac12 \; \pi yz), \\
y &= y + 0.1\sin(\nicefrac12 \; \pi xz), \\
z &= z + 0.1\sin(\nicefrac12 \; \pi xy). \\
\end{aligned}
\label{eq:transform}
\end{equation}
to each vertex of the axis-aligned mesh. The unit sphere is centered at the
origin. The hemispherically capped cylinder is defined as all
points within a radius $r=0.2$ of the line segment defined
by the two end points $[\text{-}0.51,\text{-}0.49,\text{-}0.52]$ and
$[ 0.49, 0.51, 0.48]$. For context, Figure \ref{fig:verify_geoms}
illustrates the two considered geometries, as represented by
inserted volume fractions, in the sinusoidally perturbed mesh.
Additionally, Figures \ref{fig:sphere_elem} and \ref{fig:pill_elem}
illustrate AMR subdivisions for a representative hexahedron in the
sinusoidally perturbed mesh when inserting the sphere and
hemispherically capped cylinder, respectively, with $n_{\text{sub}} = 5$
subdivisions.

\begin{figure}[ht!]
\centering
\begin{subfigure}{0.5\textwidth}
\centering
\includegraphics[width=.99\linewidth]{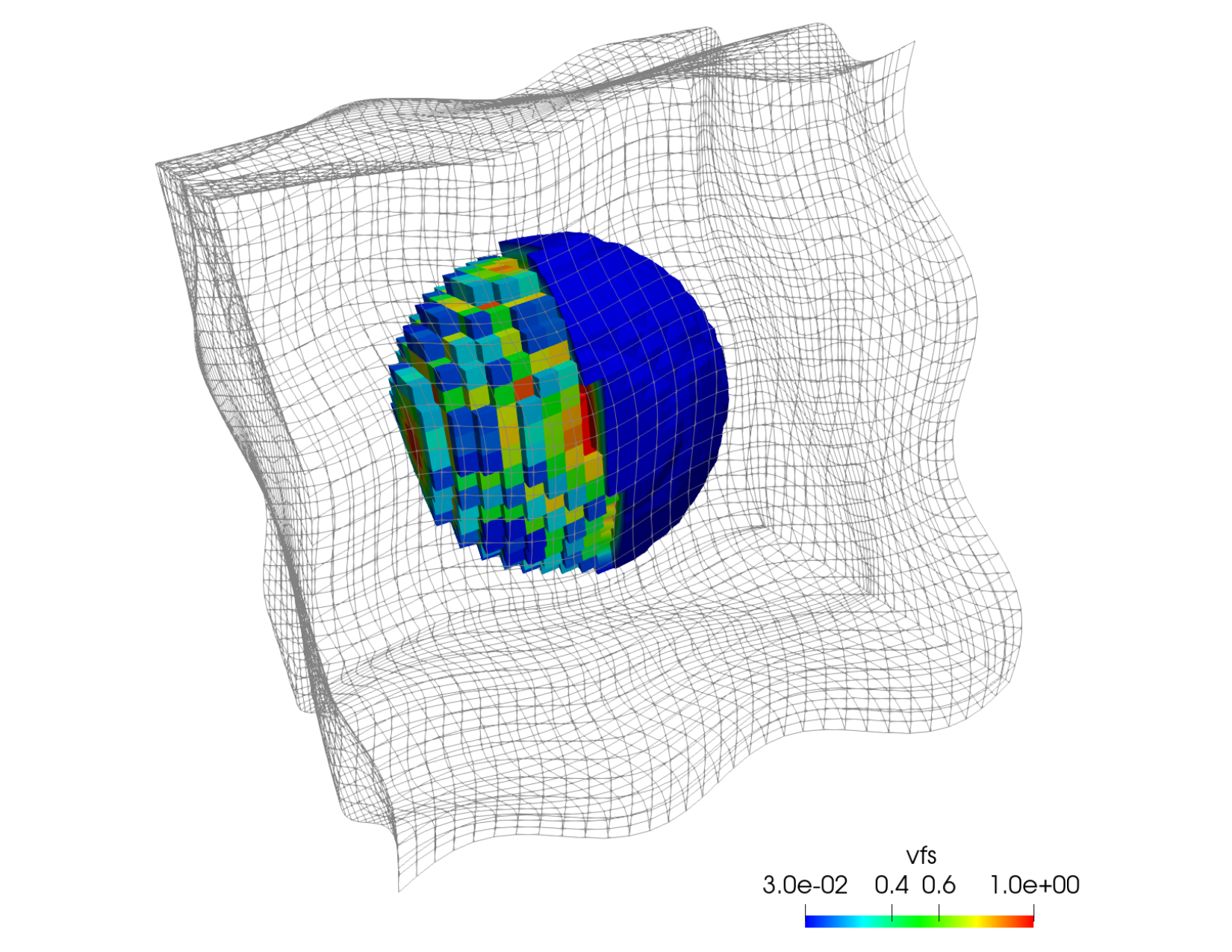}
\caption{Unit sphere.}
\end{subfigure}%
\begin{subfigure}{0.5\textwidth}
\centering
\includegraphics[width=.99\linewidth]{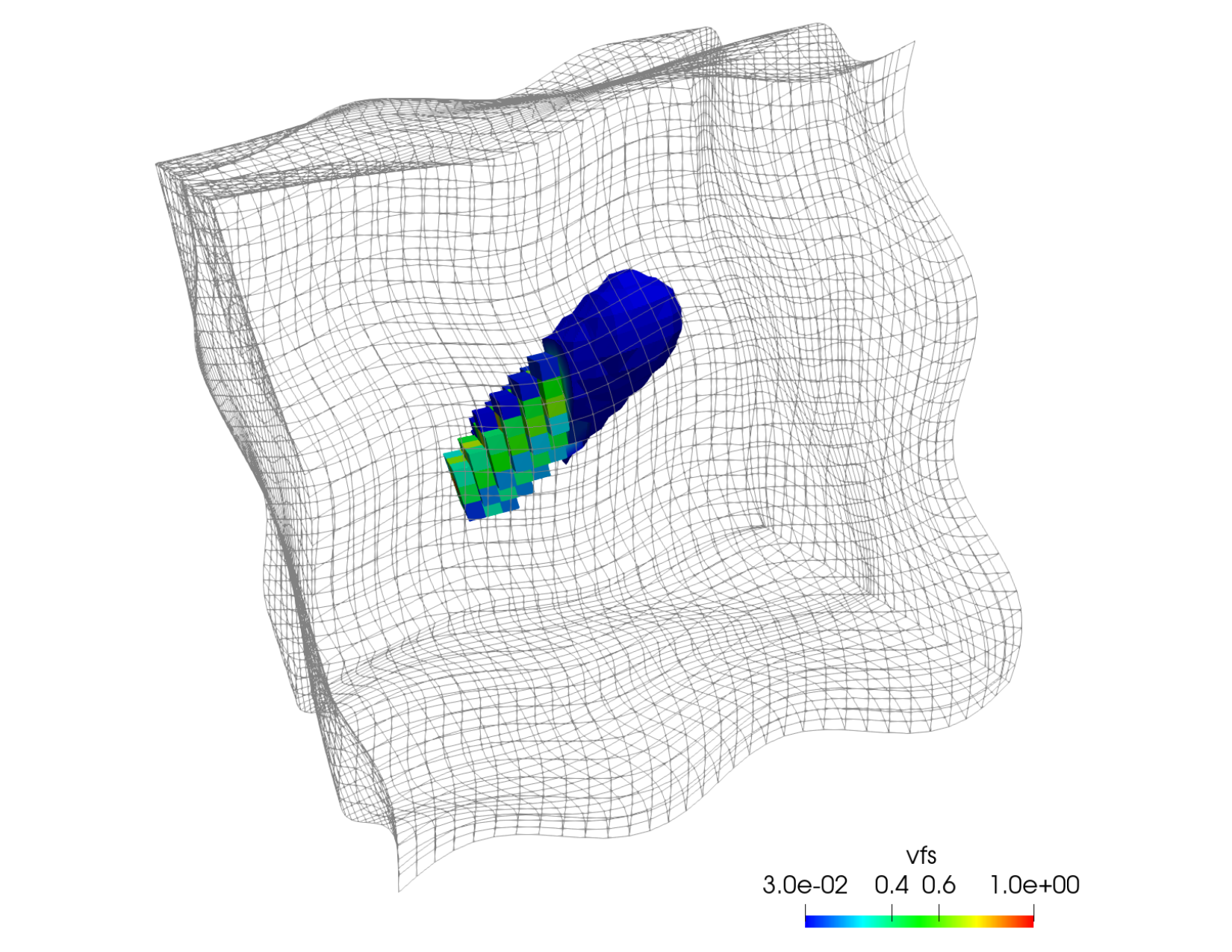}
\caption{Hemispherically capped cylinder.}
\end{subfigure}
\caption{Representations of the verification geometries
inserted into a sinusoidally perturbed mesh where the left portion
of the domain is rendered with cell volume fractions and the right
portion of the domain corresponds to a recovered isosurface.}
\label{fig:verify_geoms}
\end{figure}

\begin{figure}[ht!]
\centering
\begin{subfigure}{0.42\textwidth}
\centering
\includegraphics[width=.99\linewidth]{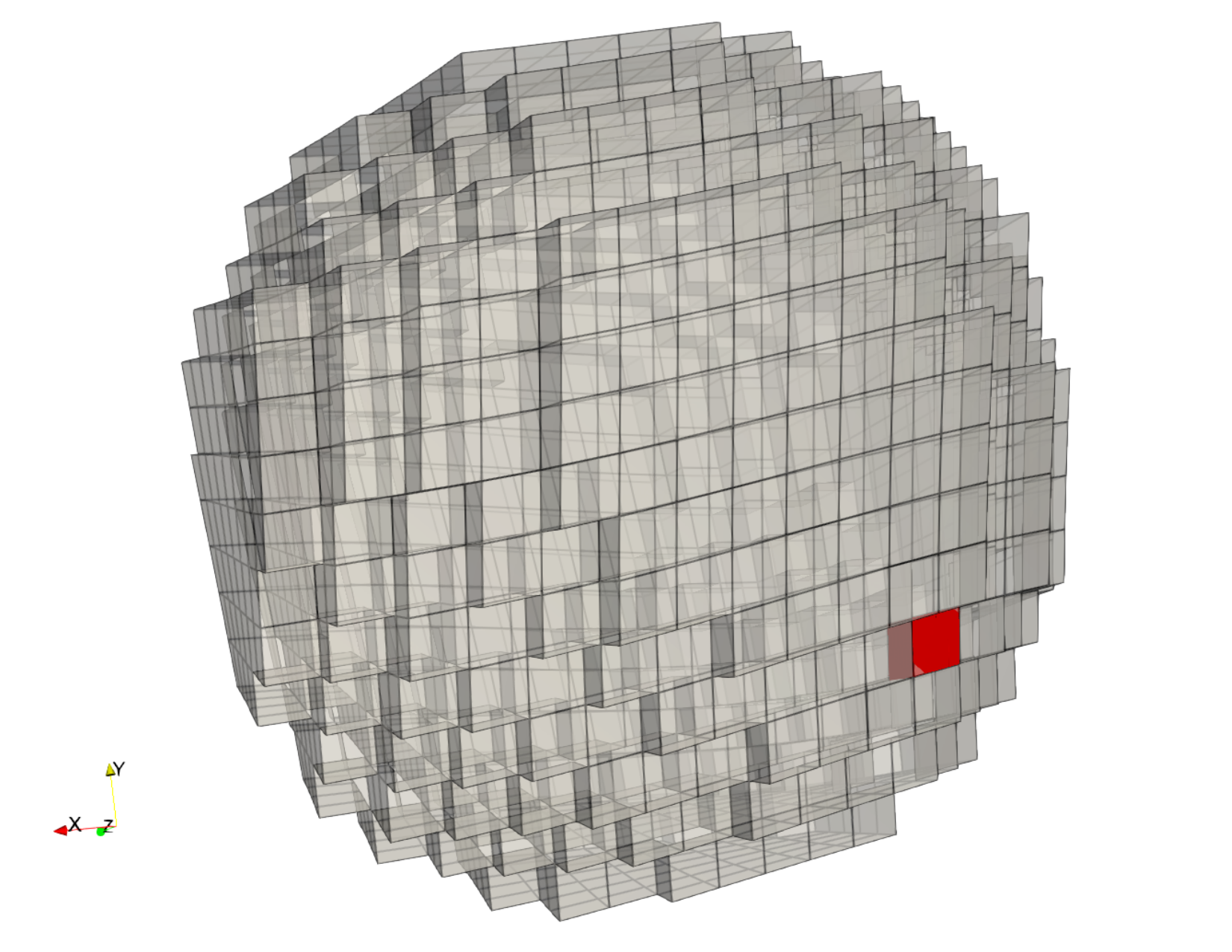}
\caption{Sample element (red).}
\end{subfigure}%
\begin{subfigure}{0.42\textwidth}
\centering
\includegraphics[width=.99\linewidth]{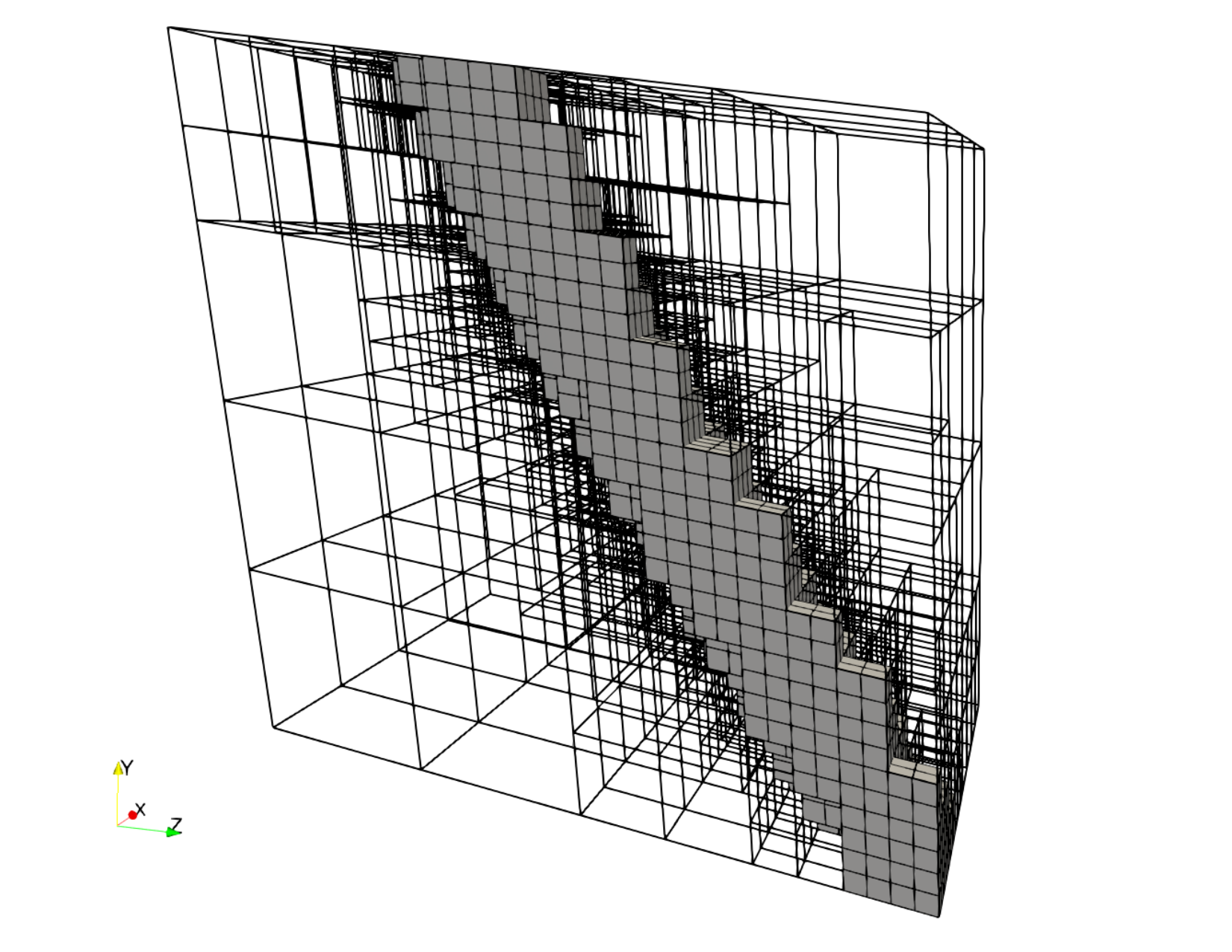}
\caption{Subhexahedra (gray) at the finest AMR subdivision.}
\end{subfigure}
\caption{An illustration of the AMR subdivision procedure for a
chosen element in the case of a sphere inserted into the
sinusoidally perturbed mesh with $n_{\text{sub}}=5$ subdivisions.
Note that the element has been rotated to more clearly illustrate the
AMR structure.}
\label{fig:sphere_elem}
\end{figure}

\begin{figure}[ht!]
\centering
\begin{subfigure}{0.42\textwidth}
\centering
\includegraphics[width=.99\linewidth]{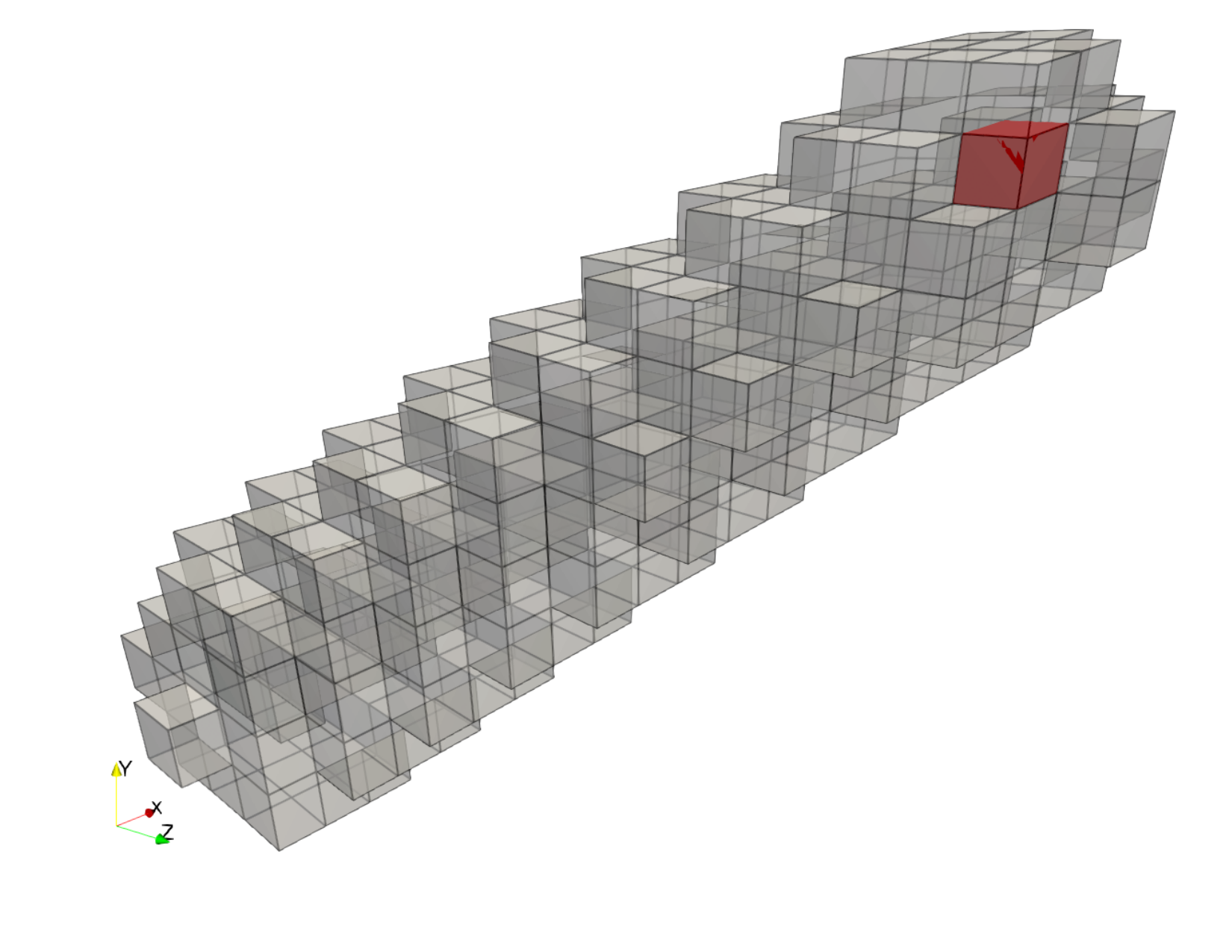}
\caption{Sample element (red).}
\end{subfigure}%
\begin{subfigure}{0.42\textwidth}
\centering
\includegraphics[width=.99\linewidth]{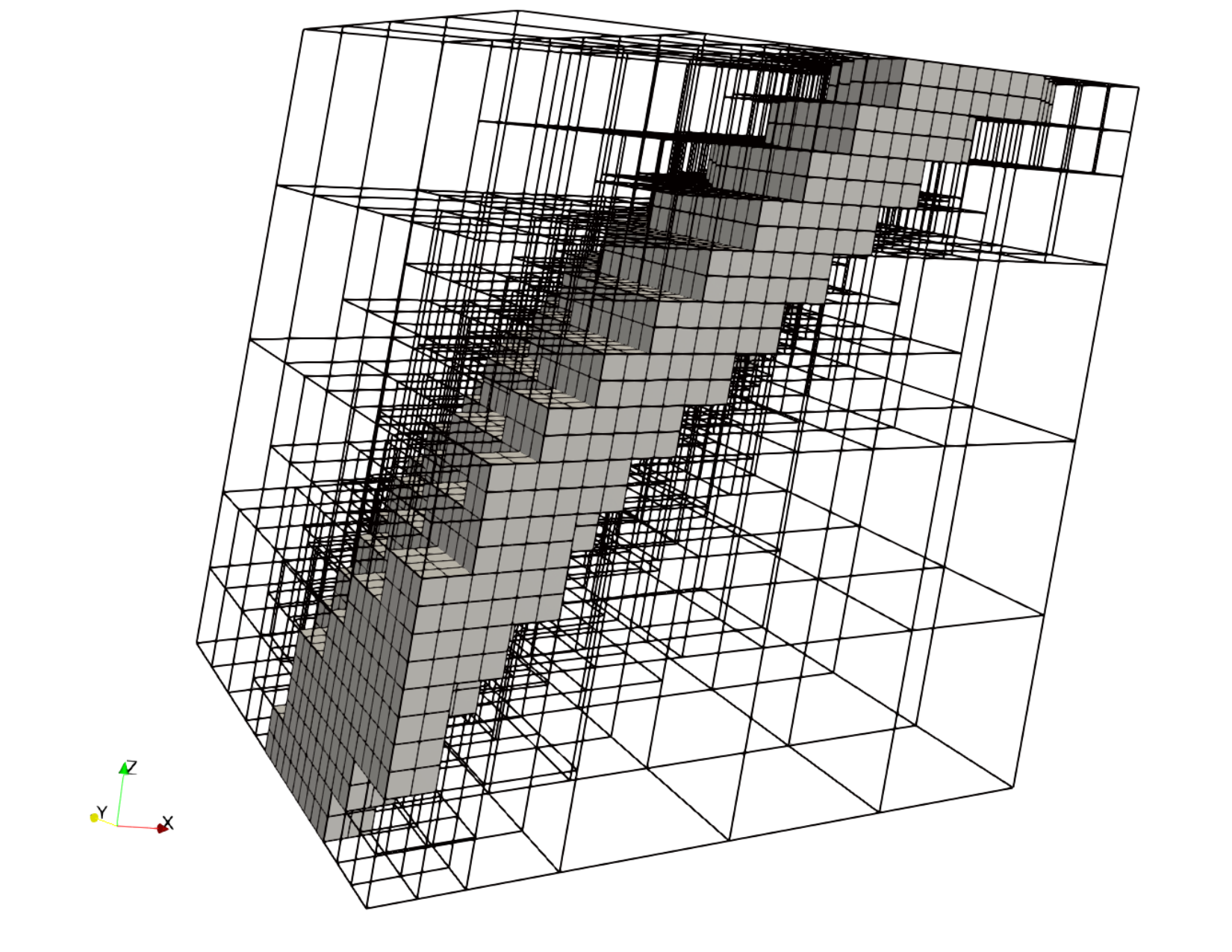}
\caption{Subhexahedra (gray) at the finest AMR subdivision.}
\end{subfigure}
\caption{An illustration of the AMR subdivision procedure for a
chosen element in the case of the spherically capped cylinder
inserted into the sinusoidally perturbed mesh with
$n_\text{sub}=5$ subdivisions. Note that the element has
been rotated to more clearly illustrate the AMR structure.}
\label{fig:pill_elem}
\end{figure}

The left-most plots of Figures \ref{fig:sphere_plots} and
\ref{fig:pill_plots} demonstrate the convergence behavior for
each geometry inserted into each mesh using either the proposed
spatially-accelerated AMR volume fraction insertion procedure or
the simpler uniform sampling approach. For each approach, we measure
the relative error, $\nicefrac{|V_{\text{exact}} - V_h|}{V_{\text{exact}}}$,
when using a maximum of $n_{\text{sub}} = \{0,1,2,3,4,5\}$ hex subdivisions,
or correspondingly $n_s = \{1,8,64,512,4096,32768\}$ sampling points
per element for the uniform sampling approach.
In each of these four plots, we observe that the AMR plane sampling
procedure achieves the theoretical second-order convergence rate.
Additionally, the relative error for both geometries inserted into the
axis-aligned mesh is lower for the AMR plane sampling
approach when compared to uniform sampling at all subdivision numbers.
The left-most plots of Figure \ref{fig:sphere_plots}
demonstrates that the uniform sampling procedure does not always
monotonically decrease the relative error when increasing the
number of sampling points. This can be explained by the fact that
the uniform sampling procedure is essentially approximating the
integral of a discontinuous function using composite midpoint
quadrature, which is not guaranteed to converge at a given rate.
This can be problematic when trying
to achieve greater accuracy in a volume representation, as it may
necessitate costly trial and error samplings with an unknown number
of uniform subdivisions to achieve a desired error.

\begin{figure}[ht!]
\centering
\includegraphics[width=0.7\linewidth]{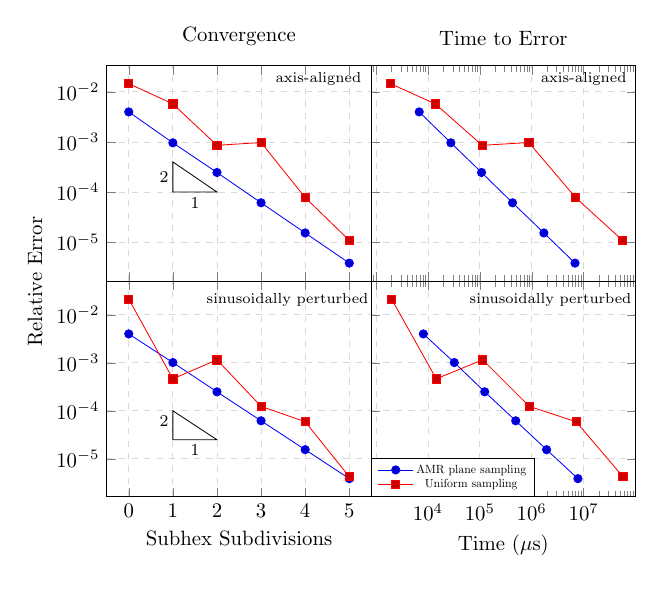}
\caption{Convergence history (left) and time to solution (right)
for a sphere inserted into an axis-aligned mesh (top) and a
sinusoidally perturbed mesh (bottom) for the proposed volume
fraction insertion procedure compared to a uniform sampling
procedure.}
\label{fig:sphere_plots}
\end{figure}

The bottom-left plot of Figure \ref{fig:sphere_plots} shows that
the uniform sampling approach is more accurate at $1$ subdivision
level than the AMR plane sampling approach and the bottom-left
plot of Figure \ref{fig:pill_plots} shows that the uniform sampling
approach is more accurate at nearly all measured subdivision levels.
However, this perceived lack of performance of the AMR plane sampling
approach can be explained when considering computational complexity
and time-to-error of each volume fraction sampling approach.
The uniform sampling procedure scales as
$\mathcal{O}(n_{\text{sub}}^3)$ while the AMR plane sampling approach
scales as $\mathcal{O}(n_{\text{sub}}^2)$ since it refines locally
to surface features.

The right-most plots of Figures \ref{fig:sphere_plots} and
\ref{fig:pill_plots} demonstrate the amount of computational time
needed to achieve a given relative volume insertion error.
In all scenarios, the right-most data point for the AMR plane sampling
approach achieves a more accurate volume insertion in less amount
of time when considering a constant error. This efficiency
is most pronounced in upper-right plot of Figure \ref{fig:pill_plots}
(corresponding to the hemispherically capped cylinder inserted into
the axis-aligned mesh), where the time-to-error for the AMR plane
sampling procedure is multiple orders of magnitude faster than
uniform sampling. This efficacy is least pronounced in
the bottom-right plot of Figure \ref{fig:pill_plots}
(corresponding to the hemispherically capped cylinder inserted into
the sinusoidally perturbed mesh), where the time-to-error for the
AMR plane sampling procedure is slightly less than an order of
magnitude faster than uniform sampling.

The $x$-axis distance between data points in the
right-most plots of Figures \ref{fig:sphere_plots} and
\ref{fig:pill_plots} illustrates that each successive
subdivision increases the time to solution by
about a factor of about $8$ for the uniform sampling approach,
whereas this increase is only about a factor of $4$ for the
AMR plane sampling approach. This agrees with the computational
complexity of the two approaches, as discussed previously.
These plots illustrate that uniform sampling can be a valid
and cost-effective approach if higher volumetric insertion
errors are acceptable, but may become cost-prohibitive
as errors are driven to lower and lower values.

\begin{figure}[ht!]
\centering
\includegraphics[width=0.7\linewidth]{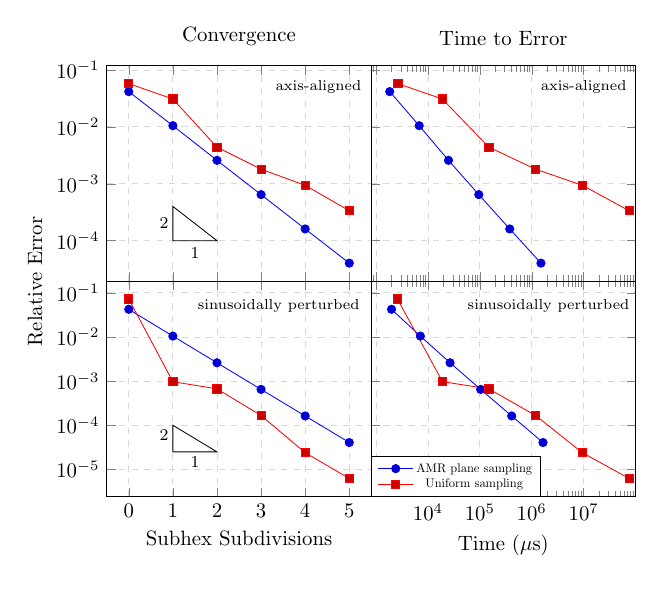}
\caption{Convergence history (left) and time to solution (right)
for a spherically capped cylinder inserted into an axis-aligned mesh (top)
and a sinusoidally perturbed mesh (bottom) for the proposed volume
fraction procedure compared to a uniform sampling procedure.}
\label{fig:pill_plots}
\end{figure}

\subsection{An Example with Poor Element Quality}
\label{ssec:results_quality}

In this section, we demonstrate that the proposed volume fraction insertion
procedure can achieve second-order convergence for a mesh with
poor quality elements. We begin by meshing the domain
$\{ \bs{x} : \bs{x} \in [-10,10]^3 \}$ with $32^3$ cubical elements
and then applying the transformation \eqref{eq:transform}
to all nodal coordinates followed by a subsequent transformation of all
nodal coordinates:
\begin{equation}
\begin{aligned}
x &= x + 2y, \\
y &= \nicefrac{3}{10} \; y, \\
z &= \nicefrac{1}{5} \; z.
\end{aligned}
\end{equation}

Figure \ref{fig:quality_mesh} illustrates the mesh and domain
obtained by this coordinate transformation. Let $Q(K^e)$ denote
the scaled Jacobian \cite{blacker1994cubit}
hexahedron quality metric, which measures a scalar deviation of an
element $K^e$ from a reference element $K^R = [-1,1]^3$ in a
geometric sense. General guidance \cite{blacker1994cubit} suggests
that this metric should remain within the range $[\nicefrac12,1]$
for `acceptable' elements. In the present mesh, 79 percent of elements
have a quality $Q(K^e) \in [0.014, 0.1]$, far below the
`acceptable' threshold.
\begin{figure}[ht!]
\begin{subfigure}{\textwidth}
\centering
\includegraphics[width=.99\linewidth]{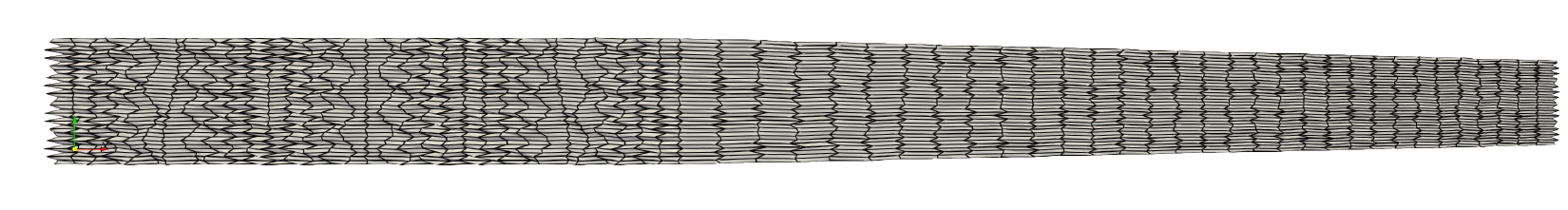}
\caption{Bottom view.}
\end{subfigure}

\begin{subfigure}{\textwidth}
\centering
\includegraphics[width=.99\linewidth]{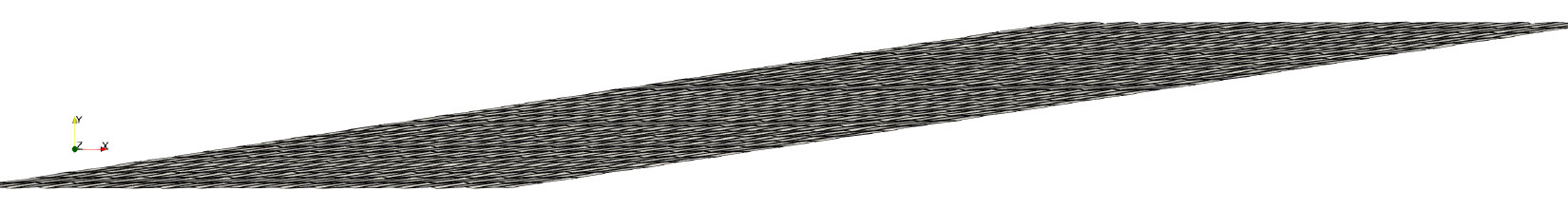}
\caption{Side view.}
\end{subfigure}

\caption{A mesh with poor quality hexahedral elements. Note that
the aspect ratio has not been exaggerated.}
\label{fig:quality_mesh}
\end{figure}

\begin{figure}[ht!]
\centering
\includegraphics[width=.4\linewidth]{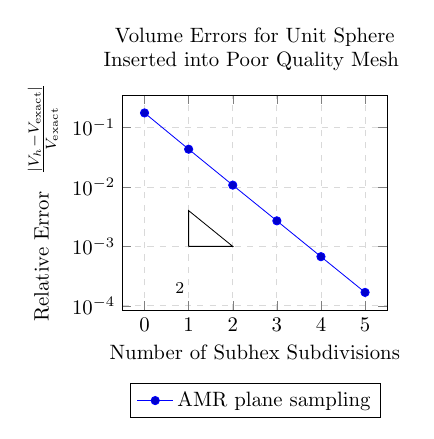}
\caption{Bottom view.}
\label{fig:quality_convergence}
\end{figure}

We measure the relative error,
$\nicefrac{|V_{\text{exact}} - V_h|}{V_{\text{exact}}}$ when inserting
a unit sphere into the mesh with poor quality hexahedra using a
maximum of $n_{\text{sub}} = \{0,1,2,3,4,5\}$ hex subdivisions.
Figure \ref{fig:quality_convergence} demonstrates that the proposed
AMR volume fraction insertion procedure is able to achieve
second-order convergence in this context. However, comparison with
Figure \ref{fig:sphere_plots}, which illustrates the convergence
history for a unit sphere inserted into meshes with better quality
hexahedra, highlights that the relative error is higher in this
context when element quality deteriorates.

We remark that this single demonstration
does not exhaustively prove the robustness of the proposed
procedure to poorly shaped background elements. It is
entirely possible that there are pathological element and
CAD geometry shapes and orientations that lead to a loss
of convergence and/or accuracy but, to date, we have not seen
such a combination. Additionally, for the majority of
physics applications that we intend to target, extremely
poor quality hexahedra are not amenable to simulation
in the first place. We propose investigating the behavior of the
proposed procedure over a wide parameterization of element quality
as an avenue for interesting future work.

\begin{figure}[ht!]
\centering
\begin{subfigure}{\textwidth}
\includegraphics[width=.99\linewidth]{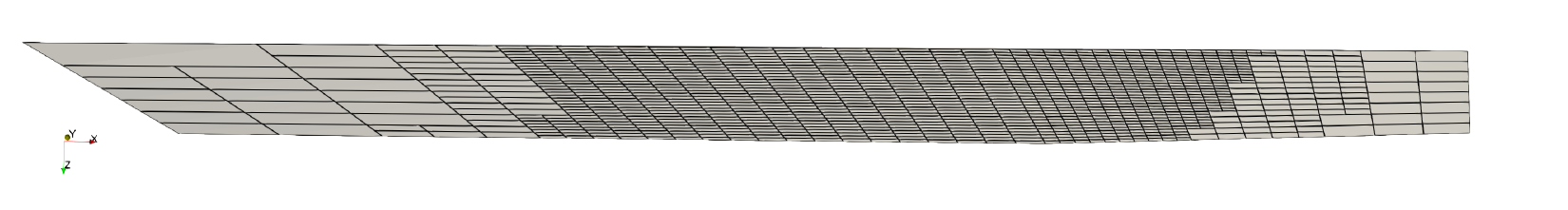}
\caption{A top down view of the element.}
\end{subfigure}

\begin{subfigure}{\textwidth}
\centering
\includegraphics[width=.99\linewidth]{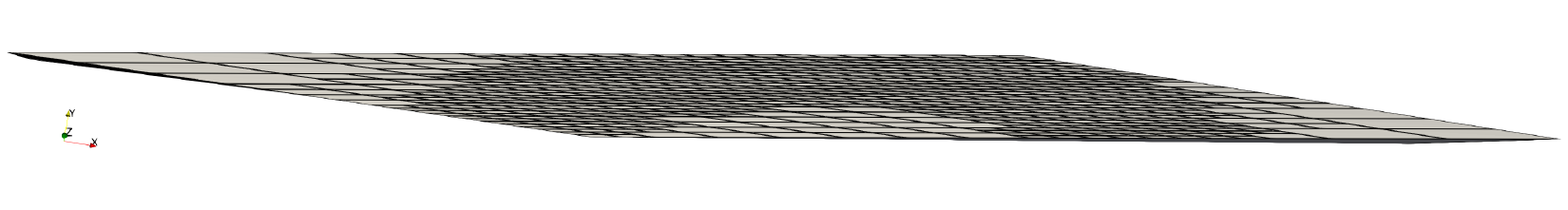}
\caption{A side view of the element.}
\end{subfigure}

\caption{An illustration of the AMR subdivision procedure for
a chosen element in the case of a unit sphere inserted into a
background mesh with poor quality hexahedra, using $n_{\text{sub}}=5$
AMR subdivisions. Note that the aspect ratio of the hexahedron
has not been exaggerated.}
\end{figure}

\subsection{A Gear into an Unstructured Mesh}
\label{ssec:results_gear}

\begin{figure}[ht!]
\centering
\begin{subfigure}{0.5\textwidth}
\centering
\includegraphics[width=.99\linewidth]{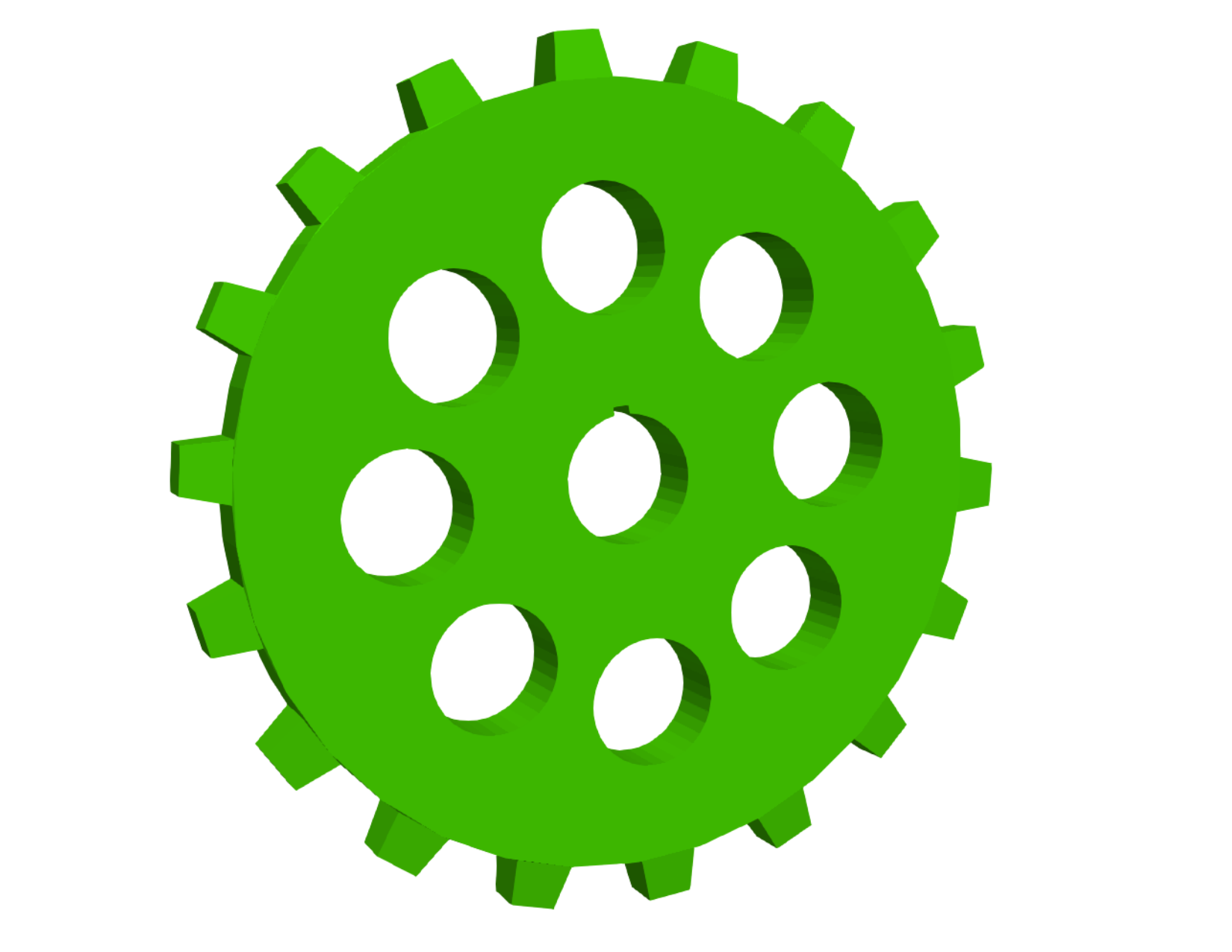}
\caption{CAD geometry.}
\end{subfigure}%
\begin{subfigure}{0.5\textwidth}
\centering
\includegraphics[width=.99\linewidth]{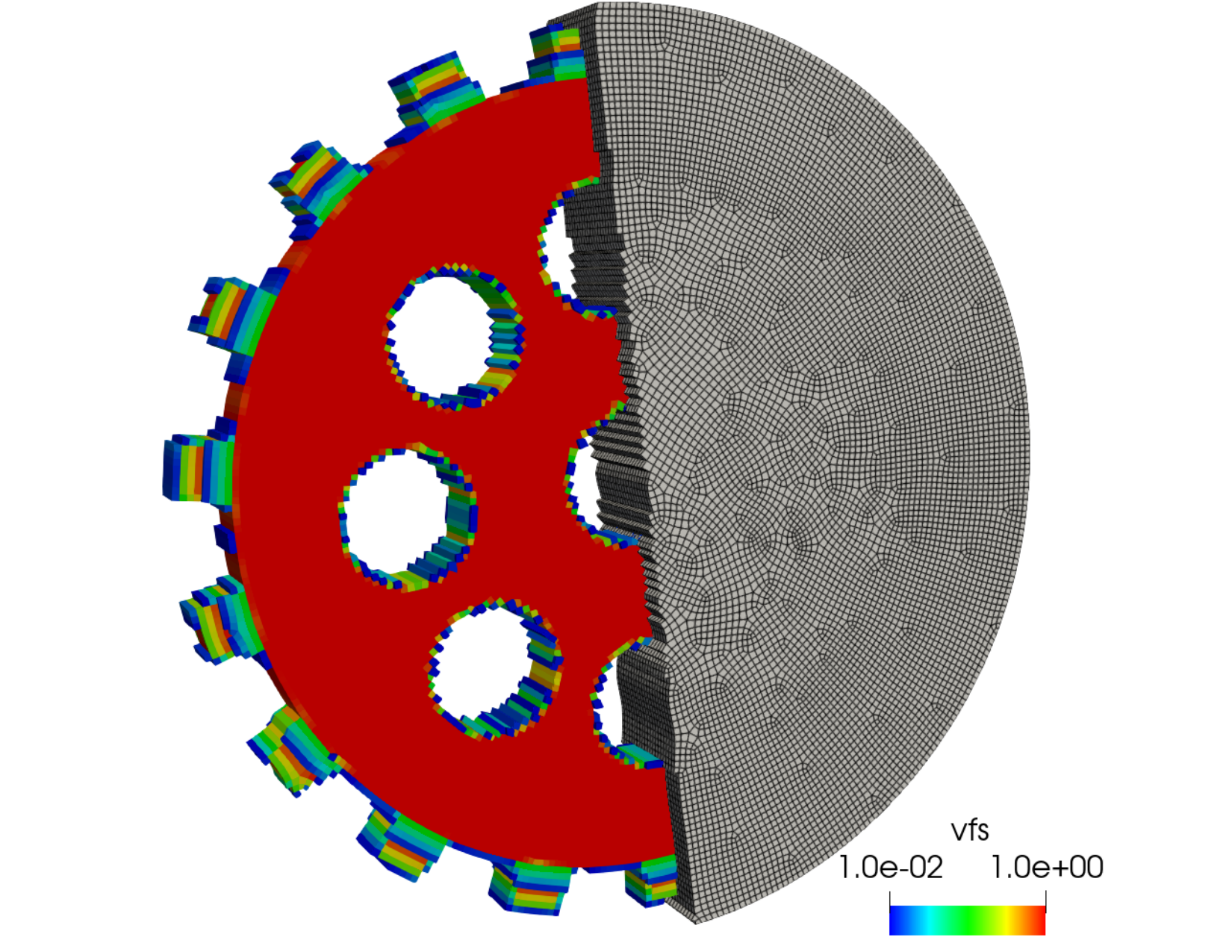}
\caption{Volume fractions and background mesh with $h=0.1$.}
\end{subfigure}
\caption{Illustrations of a gear geometry,
a background unstructured hexahedral mesh, and
the volume fractions obtained by the proposed insertion
method using two AMR subdivisions.}
\label{fig:gear}
\end{figure}

\begin{figure}[ht!]
\centering
\begin{subfigure}{0.42\textwidth}
\centering
\includegraphics[width=.99\linewidth]{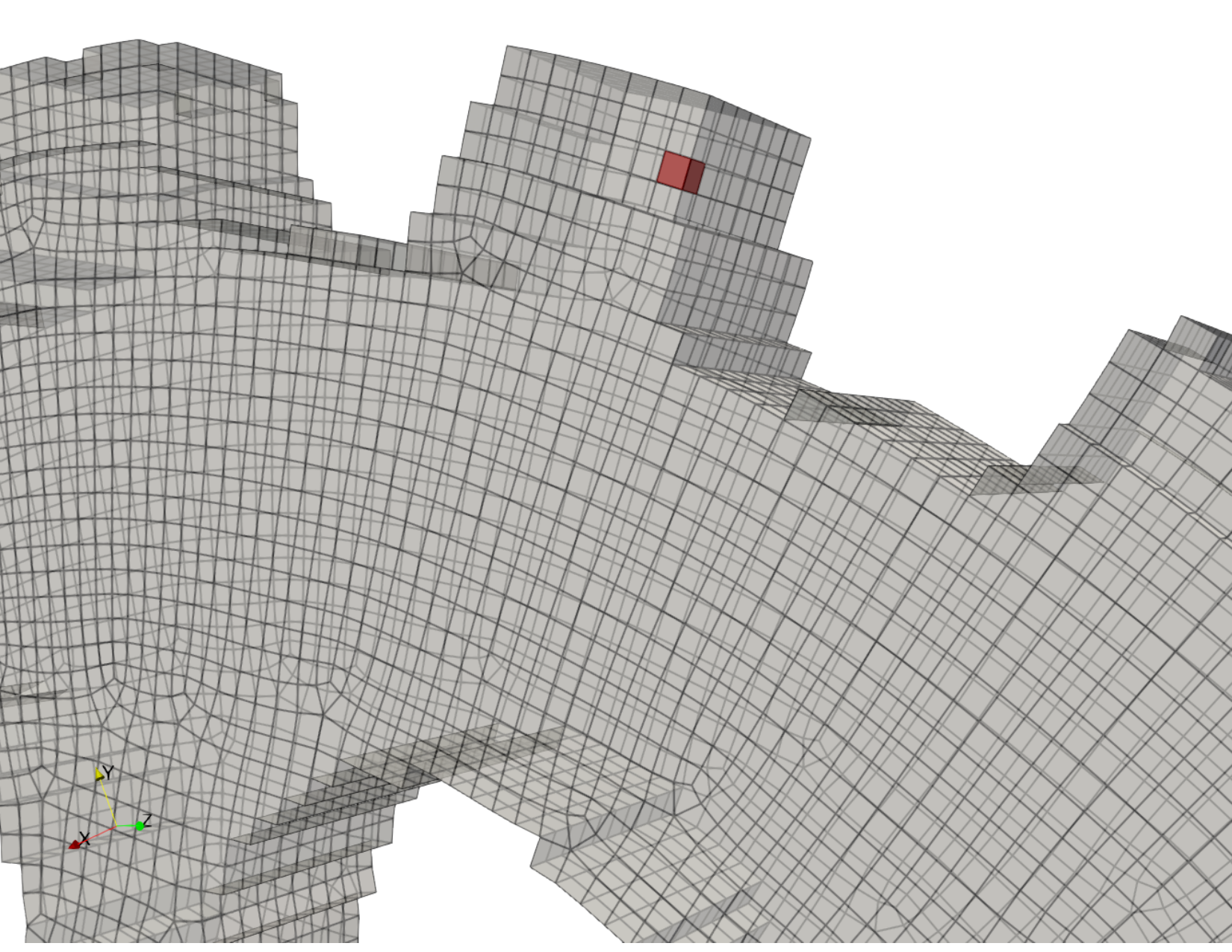}
\caption{Sample element (red).}
\end{subfigure}%
\begin{subfigure}{0.42\textwidth}
\centering
\includegraphics[width=.99\linewidth]{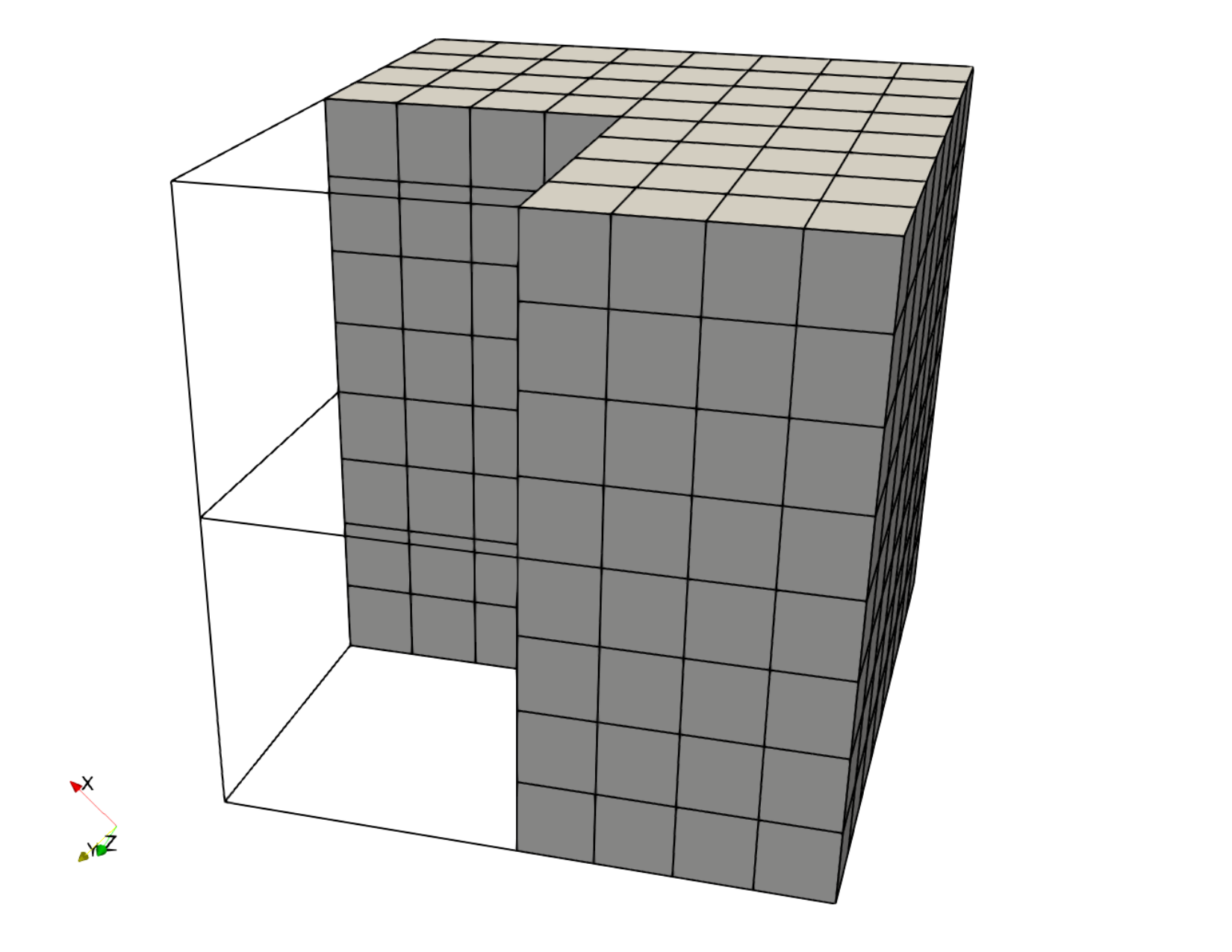}
\caption{Subhexahedra visited at the finest AMR subdivision.}
\end{subfigure}
\caption{An illustration of the AMR subdivision procedure for a
chosen element in the case of a gear geometry inserted into a
background unstructured mesh with $n_\text{sub}=3$ subdivisions.
Note that the element has been rotated to more clearly illustrate
the AMR structure.}
\label{fig:gear_elem}
\end{figure}

In this section, we investigate the accuracy and performance of the
proposed volume fraction insertion procedure applied to a more complicated
gear geometry, as shown by the left image in Figure \ref{fig:gear}.
As a first test, the CAD geometry is inserted into several meshes of
with characteristic element size $h=\{ 0.2,0.1,0.05 \}$, where the
background mesh for $h=0.1$ is shown in the right image in Figure
\ref{fig:gear}. For this test, a fixed number of AMR subdivisions
is chosen as $n_{\text{sub}} = 2$. Table \ref{tab:gear} provides information
about the AMR plane sampling procedure. The second column of Table
\ref{tab:gear} corresponds to the number of leaf nodes that are visited
in the $k$-d tree during the sampling procedure, and illustrates that
only a subset of the total hexahedra intersect the CAD geometry's surface.
The third column of Table \ref{tab:gear} corresponds to the number
of subhexahedra at the finest AMR subdivision level that intersect the
CAD geometry's surface and the final column corresponds to the theoretical
speedup, $(2^{n_{\text{sub}}}) \text{total hexes} / \text{subhexes hit}$, of the AMR
sampling procedure when compared to na\"{i}vely performing the same plane sampling
approach at every possible subhexahedron in the mesh.

\begin{table}
\begin{center}
\begin{tabular}{ c | c | c | c | c }
$h$    & total hexes & hexes hit & subhexes hit & speedup  \\ \hline
0.2    & 41380       & 11180     & 578288       & 4.57x    \\ \hline
0.1    & 324440      & 60504     & 2424304      & 8.57x    \\ \hline
0.05   & 2493320     & 234490    & 9359392      & 17.05x
\end{tabular}
\end{center}
\caption{Statistics for gear CAD geometry inserted into different meshes.}
\label{tab:gear}
\end{table}

\begin{figure}[ht!]
\centering
\includegraphics[width=.4\linewidth]{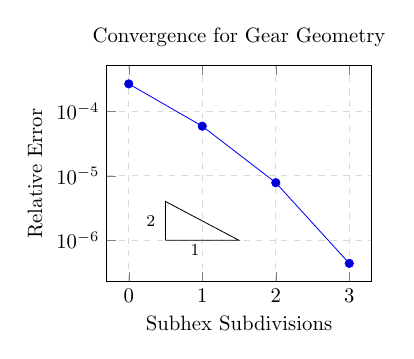}
\caption{Convergence history for the gear geometry inserted into a background
mesh of characteristic mesh size of $h=0.1$ using several maximum AMR subdivision
levels.}
\label{fig:gear_plot}
\end{figure}

As a second test, we again verify that the AMR plane sampling procedure
achieves increased accuracy as the number of AMR subdivisions
increases. In this test, the gear geometry
is inserted into a single mesh with characteristic mesh size of
$h=0.1$, as shown by the right image of Figure \ref{fig:gear}
using the number of AMR subdivisions $n_{\text{sub}} = \{0,1,2,3\}$.
Figure \ref{fig:gear_elem} illustrates the AMR subdivision of a
representative hexahedron in the background mesh when using
$n_{\text{sub}} = 3$ subdivisions.
Figure \ref{fig:gear_plot} illustrates that the relative error
$\frac{|V_{\text{exact}} - V_h|}{V_{\text{exact}}}$ decreases at
least quadratically as the number of AMR subdivisions increases.
The reference CAD volume $V_{\text{exact}}$ is reported by the
CAD kernel as $V_{\text{exact}} = 93.1818095$. In terms
of performance, the right-most data point in Figure \ref{fig:gear_plot}
took a total of $262.3$ seconds to complete.
As a comparison, the uniform sampling approach with $n_{\text{sub}} = 2$
uniform subdivisions, or $n_s = 64$ sampling points per element,
achieved a relative error of $7.305\times10^{-4}$
in $564.8$ seconds, more than twice the time required for the
most accurate AMR sampling data point. Lastly, the construction
of the $k$-d tree took $0.14$ seconds, less than
a tenth of a percent of the total volume fraction insertion runtime.

\subsection{An Example with MPI Parallelism}
\label{ssec:results_dsp}

\begin{table}
\begin{center}
\begin{tabular}{ c | c | c | c}
 & MAX & MIN & AVG \\ \hline
time (s) & 129.367 & 13.780 & 72.485
\end{tabular}
\end{center}
\caption{Timing statistics over MPI ranks for the volume insertion of
the dynamic screw pinch geometry into a 12 million element mesh
spread amongst 32 MPI ranks (as performed on a cluster with 
Intel Xeon Platinum 8260 \@2.40GHz processors).}
\label{tab:dsp_timing}
\end{table}

In this section, we investigate inserting a complex geometry using a
mesh with a distributed memory decomposition to demonstrate that
the volume fraction insertion procedure is applicable on high-performance
computing architectures. We consider a dynamic screw pinch geometry
\cite{shipley2019design} inserted into an unstructured mesh of cylinder with
12.29 million elements partitioned over 32 MPI ranks using recursive
inertial bisection \cite{ZoltanOverviewArticle2002, simon1997good}.
The left image of Figure \ref{fig:dsp} illustrates this geometry as
represented by volume fractions, where colors refer to distinct mesh
partitions. We use the proposed volume fraction insertion procedure
with $n_{\text{sub}} = 0$ adaptive subdivisions. The volume fraction
sampling proceeds by loading each mesh part onto a corresponding
single MPI rank, while the entire CAD geometry is loaded onto all
MPI ranks. Individual $k$-d trees are then constructed for each mesh
part independently (so that there is a forest of $k$-d trees) and
the remainder of the volume fraction sampling proceeds as previously
described.

Table \ref{tab:dsp_timing} illustrates the maximum, minimum, and average
runtime taken to perform the volume fraction insertion over the 32
MPI ranks. The ratio of the maximum runtime is close to an order of
magnitude larger than the minimum runtime, which highlights an important
point. The density of the geometry within the background mesh is, in general,
completely independent of the background mesh itself, which could
potentially lead to large load imbalances during the volume fraction
insertion process. It may be possible to consider partitioning the
CAD geometry itself in the same manner as the mesh, but we leave this
as an avenue for future work. Additionally, if multiple CAD geometries are
inserted in this partitioned context, such a load imbalance may eventually be
unavoidable. The right image of Figure \ref{fig:dsp}
illustrates a cutaway of an isovolume reconstructed from the inserted
volume fractions. Lastly, for most physics applications, the geometry
insertion process is a single-time upfront cost, which can also
ameliorate the negative effects of load imbalance.

\begin{figure}[ht!]
\centering
\begin{subfigure}{0.48\textwidth}
\centering
\includegraphics[width=.99\linewidth]{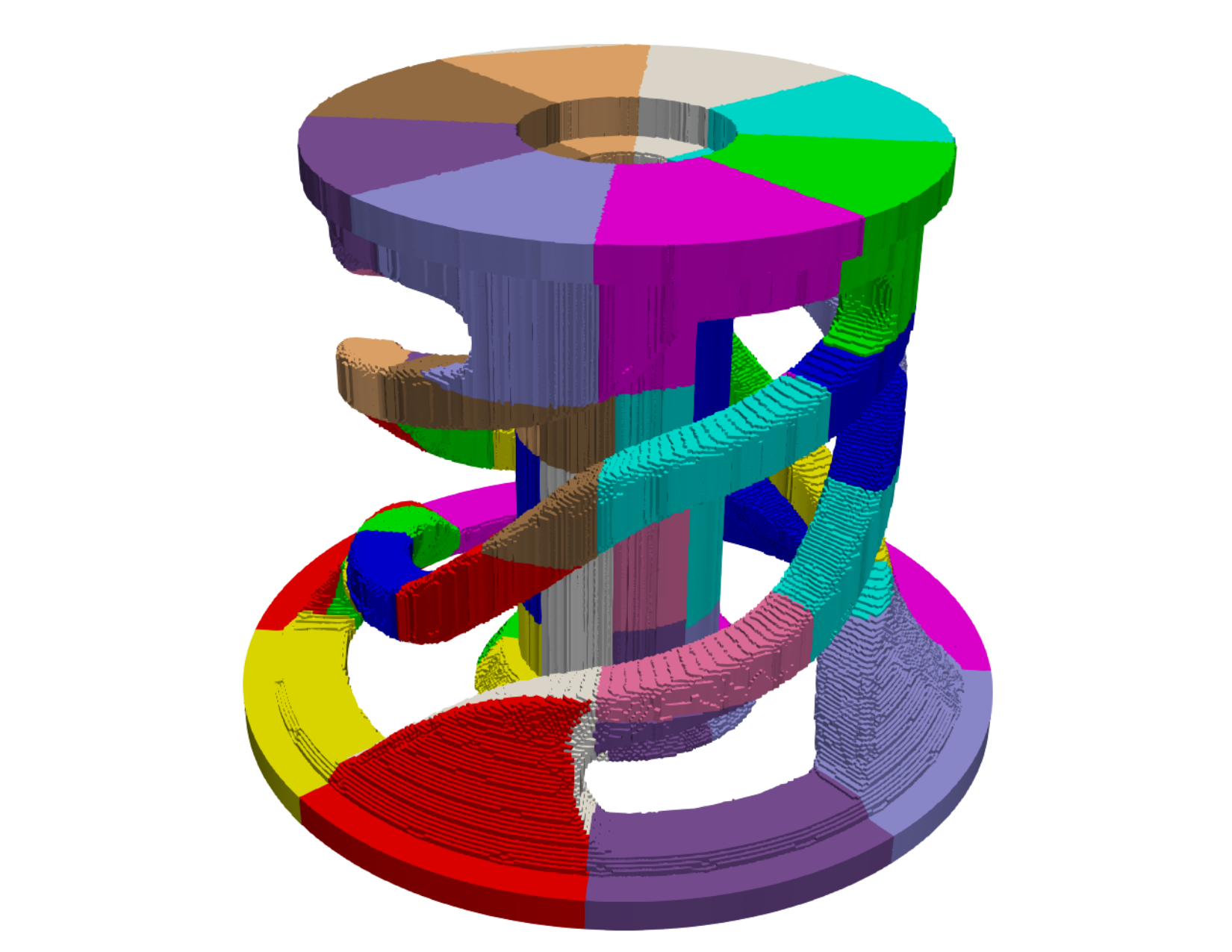}
\end{subfigure}%
\begin{subfigure}{0.48\textwidth}
\centering
\includegraphics[width=.99\linewidth]{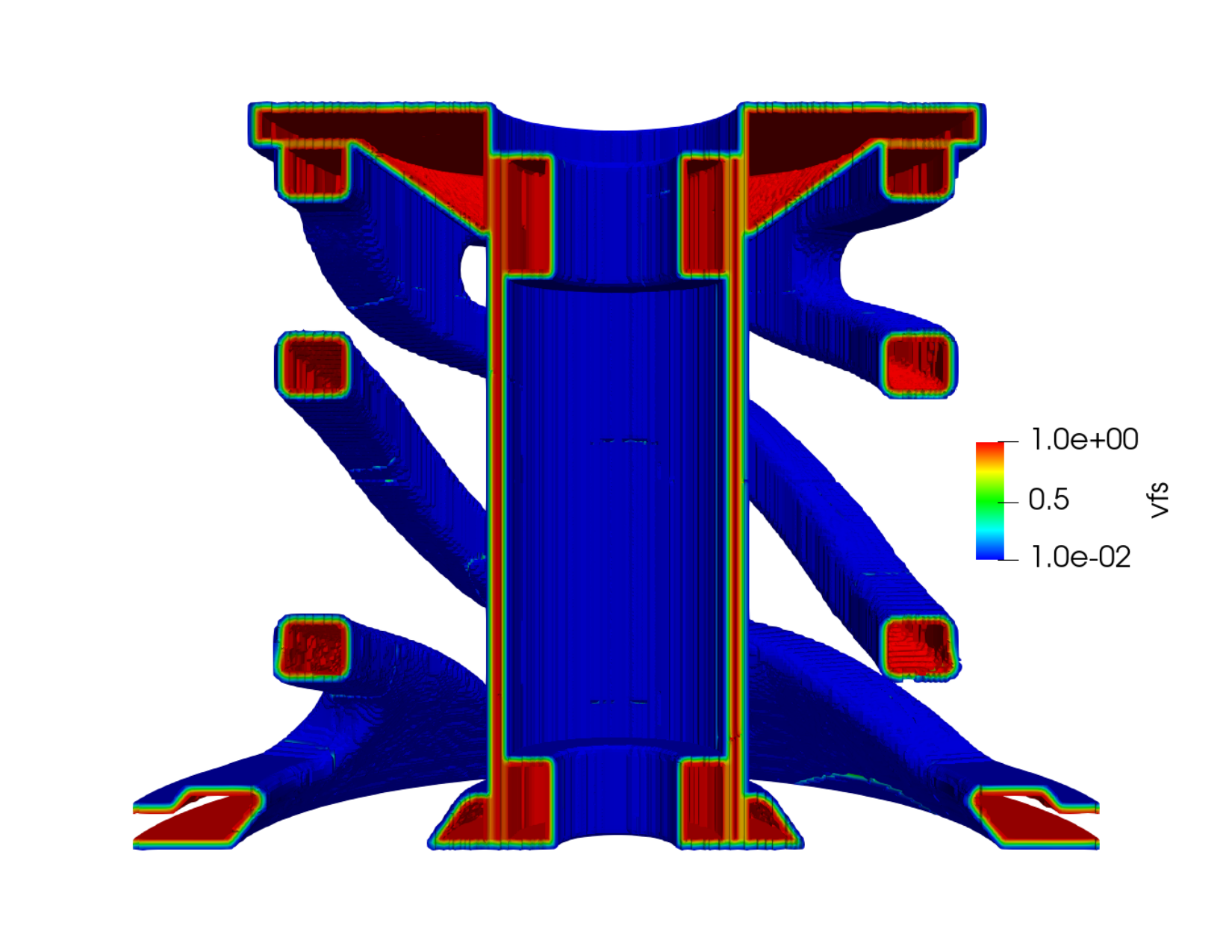}
\end{subfigure}
\caption{Left:
the dynamic screw pinch geometry as inserted into
an MPI-partitioned mesh with 32 parts where each color corresponds
to an individual mesh part.
Right:
a cutaway of an isovolume reconstruction of the inserted
dynamic screw pinch geometry into a large unstructured mesh.}
\label{fig:dsp}
\end{figure}

\section{Conclusions}
\label{sec:conclusions}

In this paper, we have developed a spatially accelerated approach to
represent three-dimensional CAD geometries by volume fractions in
background hexahedral meshes. The approach proceeds in two steps.
The first step involves assigning volume fractions for large subsets
of hexahedra that are either entirely inside or outside of the
CAD geometry through the use of a $k$-d tree. The second step involves
assigning volume fractions to elements that intersect the CAD geometry's
surface by adaptively refining such elements. At the finest AMR
subdivision level, the CAD surface is approximated linearly as a plane
and subvolume fractions are computed by intersecting a plane with
a subhexahedra. An accurate volume fraction representation is then found
by combining all subvolume fractions.

We have emphasized that this procedure requires only two queries
from a CAD kernel. First, it is necessary to determine whether or not a
spatial coordinate is inside or outside of the CAD geometry. Second,
it necessary to find the closest point on the CAD geometry's surface
to a given input spatial coordinate.
We then proved that the proposed procedure is second-order
accurate for sufficiently smooth geometries and sufficiently
refined background meshes.

To illustrate the procedure's effectiveness, we first considered
two simple geometries, a sphere and a hemispherically capped cylinder,
inserted into axis-aligned and non-axis aligned meshes. For all cases,
the proposed volume fraction insertion procedure achieved the theoretical
second-order convergence rate. Additionally, we've compared the proposed
volume fraction insertion procedure to a simpler uniform in/out sampling
procedure and demonstrated that, in most cases, the proposed procedure
achieves greater accuracy in less computational runtime. Further, we've
demonstrated that the procedure can be used reliably and robustly when
a background mesh with poor element quality is considered.
We have also demonstrated the utility of the approach for
two more realistic geometric examples.

We remark that the current work need not be limited to hexahedral
meshes. For instance, extensions to tetrahedral meshes would be trivial.
In fact, provided a bounding sphere can be computed and a uniform
refinement pattern exists for an individual element, the proposed procedure
can be applied to arbitrary element types. As an avenue for future work,
we propose applying the procedure to mixed element type meshes.
Additionally, algorithms for more sophisticated moment-of-fluid volume
representations could be developed and investigated. More
sophisticated spatial acceleration data structures could be considered
rather than the simple $k$-d tree we have considered presently.

The advantages of the newly proposed approach when compared to
existing triangulation-based methods depend on the specific
geometry, the required inserted volume accuracy, the background
mesh and its parallel decomposition. Comparisons between different
volume insertion algorithms embedded within actual engineering/scientific workflows
would be an interesting avenue for future study.


\begin{thebibliography}{10}

\bibitem{hirt1981volume}
Cyril~W. Hirt and Billy~D. Nichols.
\newblock Volume of fluid (vof) method for the dynamics of free boundaries.
\newblock {\em Journal of computational physics}, 39(1):201--225, 1981.

\bibitem{niederhaus2023alegra}
John~H.J. Niederhaus, Steven~W. Bova, James~B. Carleton, John~H. Carpenter,
  Kyle~R. Cochrane, Michael~M. Crockatt, Wen Dong, Timothy~J. Fuller, Brian~N.
  Granzow, Daniel~A. Ibanez, Michael~J. Powell, et~al.
\newblock Alegra: Finite element modeling for shock hydrodynamics and
  multiphysics.
\newblock {\em International Journal of Impact Engineering}, page 104693, 2023.

\bibitem{weiss2016spatially}
K.~Weiss, G.~Zagaris, R.~Rieben, and A.~Cook.
\newblock Spatially accelerated shape embedding in multimaterial simulations.
\newblock Technical report, Lawrence Livermore National Lab.(LLNL), Livermore,
  CA (United States), 2016.

\bibitem{rider1998reconstructing}
William~J. Rider and Douglas~B. Kothe.
\newblock Reconstructing volume tracking.
\newblock {\em Journal of {C}omputational {P}hysics}, 141(2):112--152, 1998.

\bibitem{aftosmis1998robust}
Michael~J. Aftosmis, Marsha~J. Berger, and John~E. Melton.
\newblock Robust and efficient {C}artesian mesh generation for component-based
  geometry.
\newblock {\em AIAA {J}ournal}, 36(6):952--960, 1998.

\bibitem{bentley1975multidimensional}
Jon~Louis Bentley.
\newblock Multidimensional binary search trees used for associative searching.
\newblock {\em Communications of the ACM}, 18(9):509--517, 1975.

\bibitem{cormen2022introduction}
Thomas~H. Cormen, Charles~E. Leiserson, Ronald~L. Rivest, and Clifford Stein.
\newblock {\em Introduction to Algorithms}.
\newblock MIT press, 2022.

\bibitem{jacobson2013robust}
Alec Jacobson, Ladislav Kavan, and Olga Sorkine-Hornung.
\newblock Robust inside-outside segmentation using generalized winding numbers.
\newblock {\em ACM Transactions on Graphics (TOG)}, 32(4):1--12, 2013.

\bibitem{spainhour2024robust}
Jacob Spainhour, David Gunderman, and Kenneth Weiss.
\newblock Robust containment queries over collections of rational parametric
  curves via generalized winding numbers.
\newblock {\em ACM Transactions on Graphics (TOG)}, 43(4):1--14, 2024.

\bibitem{powell2015exact}
Devon Powell and Tom Abel.
\newblock An exact general remeshing scheme applied to physically conservative
  voxelization.
\newblock {\em Journal of {C}omputational {P}hysics}, 297:340--356, 2015.

\bibitem{dyadechko2008reconstruction}
Vadim Dyadechko and Mikhail Shashkov.
\newblock Reconstruction of multi-material interfaces from moment data.
\newblock {\em Journal of {C}omputational {P}hysics}, 227(11):5361--5384, 2008.

\bibitem{ahn2009adaptive}
Hyung~Taek Ahn and Mikhail Shashkov.
\newblock Adaptive moment-of-fluid method.
\newblock {\em Journal of {C}omputational {P}hysics}, 228(8):2792--2821, 2009.

\bibitem{jemison2015filament}
Matthew Jemison, Mark Sussman, and Mikhail Shashkov.
\newblock Filament capturing with the multimaterial moment-of-fluid method.
\newblock {\em Journal of {C}omputational {P}hysics}, 285:149--172, 2015.

\bibitem{blacker1994cubit}
Ted~D Blacker, William~J Bohnhoff, and Tony~L Edwards.
\newblock Cubit mesh generation environment. volume 1: Users manual.
\newblock Technical report, Sandia National Lab.(SNL-NM), Albuquerque, NM
  (United States), 1994.

\bibitem{shipley2019design}
Gabriel~A. Shipley, Christopher~A. Jennings, and Paul~F. Schmit.
\newblock Design of dynamic screw pinch experiments for magnetized liner
  inertial fusion.
\newblock {\em Physics of Plasmas}, 26(10), 2019.

\bibitem{ZoltanOverviewArticle2002}
Karen Devine, Erik Boman, Robert Heaphy, Bruce Hendrickson, and Courtenay
  Vaughan.
\newblock {Zoltan} data management services for parallel dynamic applications.
\newblock {\em Computing in Science and Engineering}, 4(2):90--97, 2002.

\bibitem{simon1997good}
Horst~D. Simon and Shang-Hua Teng.
\newblock How good is recursive bisection?
\newblock {\em SIAM Journal on Scientific Computing}, 18(5):1436--1445, 1997.

\end{thebibliography}
\end{document}